\documentclass[10pt,amstex]{article}


\usepackage{amsmath}
\usepackage{amscd}
\usepackage{fullpage}
\usepackage{graphicx}
\usepackage{subcaption}
\usepackage{setspace}
\usepackage{listings}
\usepackage{pgf,tikz}
\usetikzlibrary{shapes, arrows}
\usetikzlibrary{decorations.pathmorphing,patterns}
\usepackage{amsfonts}
\usepackage{amssymb}
\usepackage{tabularx}
\usepackage{enumerate}
\usepackage[title,titletoc,toc]{appendix}
\usepackage{amsthm}
\usepackage{subcaption}
\usepackage{color}
\DeclareMathAlphabet{\mathpzc}{OT1}{pzc}{m}{it}

\newcommand{\zero}{^{(0)}}
\newcommand{\one}{^{(1)}}

\newcommand{\e}{\epsilon}

\newcommand{\Complex}{\mathbb{C}}

\newcommand{\MBold}[1]{\mathbf{#1}}
\newcommand{\MZero}{\MBold{0}}
\newcommand{\MId}{\MBold{I}}
\newcommand{\ML}{L}
\newcommand{\MP}{P}
\newcommand{\MG}{G}

\newtheorem{thmstyle}{Theorem}[section]

\newtheorem{Theorem}[thmstyle]{Theorem}

\newtheorem{Corollary}[thmstyle]{Corollary}
\newtheorem{Proposition}[thmstyle]{Proposition}

\newtheorem{Remark}[thmstyle]{Remark}


\DeclareMathOperator{\sgn}{sgn}

\DeclareMathOperator{\Avg}{Avg}


\newcommand{\RText}[1]{\text{#1}}
\newcommand{\Vect}[1]{\mathbf{#1}}

\newcommand{\ERef}[1]{\eqref{#1}}

\begin{document}

\title{Linear Nearest Neighbor Flocks with All Distinct Agents}
\author{R. Lyons
\thanks{Digimarc Corporation, 9405 SW Gemini Drive, Beaverton, OR, USA 97008-7192; Fariborz Maseeh Dept. of Math. and Stat., Portland State Univ.;
e-mail: rlyons@pdx.edu},
J. J. P. Veerman\thanks{Fariborz Maseeh Dept. of Math. and Stat., Portland State Univ.;
e-mail: veerman@pdx.edu}
}
\maketitle

\vskip .0in

\noindent

\begin{abstract}
This paper analyzes the global dynamics of 1-dimensional agent arrays 
with nearest neighbor linear couplings.   
The equations of motion are second order linear ODE’s with constant coefficients. 
The novel part of this research is that the couplings are different 
for each distinct agent.  
We allow the forces to depend on the positions and velocity (damping terms)
but the magnitudes of both the position and velocity couplings are different for each agent.
We, also, do not assume that the forces are ``Newtonian'' 
(i.e. the force due to A on B equals the minus the force of B on A) 
as this assumption does not apply to certain situations, such as traffic modeling. 
For example, driver A reacting to driver B does not imply the opposite reaction in driver B.

There are no known analytical means to solve these systems, 
even though they are linear, and so relatively little is known about them.
This paper is a generalization of previous work that computed
the global dynamics of 1-dimensional sequences of identical agents \cite{VeermanTransients_2016}
assuming periodic boundary conditions.
In this paper, we push that method further, similar to \cite{VeermanBaldivieso_2019},
and use an extended periodic boundary condition to 
to gain quantitative insights to the systems under consideration. 
We find that we can approximate the global dynamics of such a system 
by carefully analyzing the low-frequency behavior of the system 
with (generalized) periodic boundary conditions.
\end{abstract}

%
%
\section{Introduction}

A one-dimensional lattice of coupled agents is a model for many physical systems.
If all the agents are identical then connecting nearest neighbors with a Hooke’s Law
is a simple model of one-dimensional crystals \cite{Ashcroft_SolidStat_1976}.
If one assumes periodic boundary conditions, then the eigenvectors of the system
are the Discrete Fourier Transform basis functions.
In the 1950's this nearest neighbor crystal model was extended
to include agents of different mass \cite{Dyson_LinChain_PhysRev}.

In the 1950's simplified ``microscopic'' traffic models
appeared with agents coupled with a force dependent on spatial differences
and an added force that is a function of the difference of agent velocities \cite{TrafficCarFollowing_Montroll_1957}
(see \cite{KernerTraffic_2004} for a survey of traffic models).
The velocity dependent force term originated as an empirical law,
observed when an automobile attempts to follow a leader.
Since then the subject of cooperative control
has matured considerably \cite{Mackenroth_RobustControl_2004} \cite{RenBeard_ConcControl_2008}.

Recent technological advances make automated traffic platoons possible
so there has been renewed interest in one-dimensional lattice dynamics.
There are several works on both single
\cite{DistribControlAutoVeh_LiZheng_2017} \cite{LinFardadJovan2012VehicleFormNN}
and double integrator systems \cite{StringInstability_Middleton}
\cite{StabLargePlatoon_HaoBarooah_2013} \cite{DistribControlAutoVeh_LiZheng_2017}.
The results for both single and double integrator systems with nearest neighbor interactions
are summarized in \cite{LinFardadJovan2012VehicleFormNN}.

In the one-dimensional traffic platoon, one would like to know whether
is it possible, or even reasonable, to have a long platoon consisting of $N$ agents?
If we form a caravan of trucks,
do we need to break the caravan into separate small chunks
or can we form a single caravan of, perhaps, over a thousand trucks.
There is a large literature on the topic, but almost all the literature
we are aware of addresses this question in the unrealistic case
where all cars are identical or distributed in some other highly improbable way.
Each agent is distinct and may have a unique mass.
We can force the forward and backward couplings to have a specific ratio
but it is difficult and certainly impracticable to insist
that the force magnitudes are identical for all agents.
In section \ref{sec!symlap} we assign specific ratios for forward and backward couplings
as in equations \ERef{symlap!assume_rhox_equal} and \ERef{symlap!assume_rhov_equal},
but the weights $g_{x}^{(\alpha)}, g_{v}^{(\alpha)}$ are chosen randomly from a distribution.

More specifically, in this paper we shall analyze a one-dimensional lattice of agents
with linear nearest neighbor couplings
determined by the distance and the velocity difference between neighbors.
We will assume equations of motion where the force on an agent
is linear in position and velocity differences (double integrator system).
We shall not assume that the forces are ``Newtonian''
(i.e. the force due to A on B equals the minus the force of B on A).

In previous work (\cite{VeermanTransients_2016}, \cite{VeermanSigVel_2016}) it was shown that
if the agents are identical and the system has periodic boundary conditions
then the equations of motion are solvable.
This system has equations of motion given by the ODE,
\begin{equation}
\label{intro!eq_equofmotion}
 \frac{d}{dt} \begin{pmatrix}
   Z        \\
  \dot{Z}
 \end{pmatrix} = \begin{pmatrix}
     \MZero     &    \MId         \\
     \ML_{x}    &    \ML_{v}
 \end{pmatrix} \begin{pmatrix}
   Z        \\
  \dot{Z}
 \end{pmatrix} \; ,
\end{equation}
where $Z$ is a vector positions and $\ML_{x}, \ML_{v}$ are row-sum zero circulant tri-diagonal matrices.
Since the matrices are circulant, $\ML_{x}$ and $\ML_{v}$ commute.
This is instrumental in the finding solutions and the conditions of stability.
If the forces are extended to include next-nearest neighbor terms
then, assuming periodic boundary conditions, the equations are motion are,
also, given by equation \ERef{intro!eq_equofmotion}.
As in the nearest neighbor case, $\ML_x$ and $\ML_v$ are row-sum zero circulant matrices but, this time,
they have 5 non-zero diagonals.
The solutions are more complicated as are the conditions of stability, \cite{Herbrych2015DynamicsOL}.
For both systems the matrices $\ML_{x}$ and $\ML_{v}$ commute.
In both these cases the characteristic polynomial has a double root at $0$,
which corresponds to the stable configuration where all agents are moving at a constant velocity.
To find the asymptotic behavior of the system you can expand the zero locus around this point.
On stable systems, roots near the origin dominate the long-term behavior of the system
as other roots have larger negative real components and so decay faster.
Expanding the characteristic equation near the origin
yields an approximation to the signal velocity and a dispersion term.

However, in this work we do not assume the agents are identical.
Instead we introduce a repeating sequence of $p$ distinct agents.
Duplicate this string of $p$ agents and use an extension of periodic boundary conditions
which was first described in \cite{VeermanBaldivieso_2019}.
In particular, let $A_{0}, \cdots, A_{p-1}$ be $p$ agent types organized in a one-dimensional lattice,
\[
 A_{p-1} \leftrightarrow A_{p-2} \leftrightarrow
    \cdots
 \leftrightarrow A_{1} \leftrightarrow A_{0}.
\]
Then repeat this $p-$sequence $q$ times to get a total of $N = pq$ agents.
The general form for this system the matrices $\ML_{x}$ and $\ML_{v}$ {\bfseries do not commute}.
The case for $p = 2$ and $p = 3$ is analyzed in \cite{VeermanBaldivieso_2019}
for both nearest neighbor and next nearest neighbor interactions.
Since $\ML_{x}$ and $\ML_{v}$ do not commute the system is considerably more difficult to analyze,
but some conditions necessary for stability are derived.

In this paper we present a variety of tools to analyze this general system.
The goal is to first understand the periodic case
and then use these results to shed light on the general system of $N$ agents traveling on the real line.
In Section \ref{sec!nnsystem} Theorem \ref{nnsystem!thm_nec_for_stab}
we prove a generalization of the stability condition in \cite{VeermanBaldivieso_2019}.
To pursue the dynamics of a general system we start
with the system in equation \ERef{intro!eq_equofmotion},
where $\ML_{x}$ and $\ML_{v}$ are the circulant matrices with $-1$ on the diagonal
and $1/2$ on the sub and super diagonals.
This is the system in \cite{VeermanSigVel_2016} with
\[
   \rho_{x,1} = \rho_{x,-1} = -1/2
   \RText{  and  }
   \rho_{v,1} = \rho_{v,-1} = -1/2.
\]
This system is stable, \cite{VeermanSigVel_2016}.
We extend this by scaling each row by a distinct value,
which is the same as taking distinct weights $g_{x}^{(\alpha)}$ and $g_{v}^{(\alpha)}$.
In this case equation \ERef{intro!eq_equofmotion} becomes,
\begin{equation}
\label{intro!eq_equofmotion_Gs}
 \frac{d}{dt} \begin{pmatrix}
   Z        \\
  \dot{Z}
 \end{pmatrix} = \begin{pmatrix}
     \MZero             &    \MId         \\
     \MG_{x} \ML_{x}    &    \MG_{v} \ML_{v}
 \end{pmatrix} \begin{pmatrix}
   Z        \\
  \dot{Z}
 \end{pmatrix} \; ,
\end{equation}
where $\MG_{x}$ and $\MG_{v}$ are diagonal matrices with positive real values.
Again, we note that $\MG_{x} \ML_{x}$ and $\MG_{v} \ML_{v}$ do not commute, so this system is more difficult to analyze.
In Section \ref{sec!symlap} we prove stability for a special case of this general problem.
In Section \ref{sec!charpolyzero} we analyze the dynamics by expanding
the characteristic polynomial root locus around the double root at $0$.
As in the problems above the asymptotic behavior of the system is given by roots near the double root at $0$.
Expansion around this point results in an expression of the signal velocity and a dispersion term,
given in Theorem \ERef{cpolyzero!thm_secorder}.

The expansion, used in Section \ref{sec!charpolyzero}, requires an extension of the periodic boundary condition
first found in \cite{VeermanTransients_2016} and \cite{VeermanBaldivieso_2019}.
We take the $p$ distinct agents and repeat them $q$ times.
This guarantees that the locus is well approximated by a continuous curve as $q$ gets large,
and this allows us to use a Taylor expansion.
In the simulations in section \ref{sec!simul} we will set $q = 1$
and show that the results apply well to the general case of $N = p$ distinct agents.

\section{Linear Nearest Neighbor Systems}
\label{sec!nnsystem}

In this section we describe the equations of motion for one-dimensional lattice
with linear equations and nearest neighbor interactions.
In Theorem \ref{nnsystem!thm_nec_for_stab} below,
we prove a necessary condition for stability.
Later, in Section \ref{sec!symlap} we restrict to the case where the Laplacians
are symmetric matrices that have rows scaled by independent positive weights.
In this case we can derive some properties of the systems dynamics.

We consider $N$ agents consisting of $q$ groups of $p$ agents.
The first cluster of $p$ are unique and they are followed by $q$ identical clusters of $p$
so that the total number of agents is $N = pq$.
We follow the conventions in \cite{VeermanBaldivieso_2019},
except we change the signs so that $g_{x}, g_{v} \ge 0$.
After adjusting for the spacing, by subtracting $k \Delta$ from the $k$'th agent (see \cite{VeermanSigVel_2016}),
we can write the nearest neighbor coupling as,
\begin{equation}
\label{nnsystem!eq_ode_system}
\frac{ d^2 z_{k}^{(\alpha)} }{d t^2 }
  = - g_{x}^{(\alpha)} \left(
       z_{k}^{(\alpha)} + \rho_{x,1}^{(\alpha)} z_{k+1}^{(\alpha+1)}
              + \rho_{x,-1}^{(\alpha)} z_{k-1}^{(\alpha-1)}
    \right)
    - g_{v}^{(\alpha)} \left(
       \dot{z}_{k}^{(\alpha)} + \rho_{v,1}^{(\alpha)} \dot{z}_{k+1}^{(\alpha+1)}
              + \rho_{v,-1}^{(\alpha)} \dot{z}_{k-1}^{(\alpha-1)}
    \right)
\end{equation}
where the arithmetic in $\alpha$ is $\mod(p)$.
The coefficients satisfy $\rho_{x,-1}^{(\alpha)} + 1 + \rho_{x,1}^{(\alpha)} = 0$
and $\rho_{v,-1}^{(\alpha)} + 1 + \rho_{v,1}^{(\alpha)} = 0$
for all $\alpha = 0, \cdots, p-1$.

At this point we assume periodic boundary conditions.
and re-arrange vector components so the first $q$ coordinates are agents of type $\alpha = 0$,
the next $q$ agents are type $\alpha = 1$, etc.
This is a generalization of technique in \cite{VeermanBaldivieso_2019}.

To write the system in matrix form we start by writing an $N \times N$ matrix,
\begin{equation}
\ML_{\eta} = \begin{pmatrix}
    -\MId                & - \rho_{\eta,1}^{(0)} \MId         & \MZero
             & \cdots         &  -\rho_{\eta,-1}^{(0)} \MP_{-}
    \\
-\rho_{\eta,-1}^{(1)} \MId  & -\MId                           &   -\rho_{\eta,1}^{(1)} \MId
             &  \cdots        & \MZero
    \\
 \MZero                  & -\rho_{\eta,-1}^{(2)} \MId         &  -\MId
             &  \cdots        & \MZero
    \\
 \vdots                       & \vdots                        & \vdots
             &  \ddots        & \vdots
    \\
-\rho_{\eta,-1}^{(p-1)} \MP_{+}  &  \MZero             &  \MZero
             & \cdots         & -\MId
\end{pmatrix}
  \RText{  for } \eta = x, v,
\end{equation}
where $\MId$ and $\MZero$ are $q \times q$ matrices
and the matrix $\MP_+$ and its inverse $\MP_-$
are $q \times q$ cyclic permutations matrices,
\begin{align}
\MP_{+} =
\begin{pmatrix}
0 		&	1		&	0	&	\cdots		&	0         \\
0		&	0		&	1		&	\ddots		&	\vdots \\
\vdots	&	\ddots	&	\ddots		&	\ddots		&	0   \\
0		&	\ddots	&	\ddots		&		0		&  1   \\
1		&	0		&	\cdots		&	0		&	0	
\end{pmatrix}~,~~~
\MP_{-} =
\begin{pmatrix}
0 		&	0		&	\cdots	&	0		&	1\\
1		&	0		&	0		&	\ddots		&	0\\
0		&	1		&	\ddots		&	\ddots		&	\vdots \\
\vdots	&	\ddots	&		\ddots		&		0		&  0\\
0		&	\cdots	&		0		&	1			&	0	
\end{pmatrix}.
\label{permut_matrices}
\end{align}

We write the scales $g_{x}^{(\alpha)}, g_{v}^{(\alpha)}$ as an $N \times N$ matrix,
\begin{equation}
\MG_{\eta} = \begin{pmatrix}
   g_{\eta}^{(0)} \MId     &     \MZero     &    \MZero    &   \cdots     &    \MZero
    \\
   \MZero           &  g_{\eta}^{(1)} \MId  &    \MZero    &    \cdots    &    \MZero
    \\
   \MZero           &   \MZero       &  g_{\eta}^{(2)} \MId  &  \cdots    &    \MZero
    \\
  \vdots            &  \vdots        &   \vdots      &  \ddots     &    \vdots
    \\
  \MZero            &  \MZero        &   \MZero       &  \cdots    &  g_{\eta}^{(p-1)} \MId
\end{pmatrix}
  \RText{  for } \eta = x, v.
\end{equation}

The equations are motion are,
\begin{equation}
\begin{pmatrix}
\mathbf{\dot z}\zero \\
 \mathbf{\dot z}\one \\
 \vdots   \\
 \mathbf{\dot z}^{(p-1)}\\
 {\mathbf{\ddot z}}\zero \\
 {\mathbf{\ddot z}}\one \\
 \vdots   \\
 {\mathbf{\ddot z}}^{(p-1)}
 \end{pmatrix} =
 \left( \begin{array}{ccc|ccc}
       &                  &   &     &                   &
    \\
       &   \MZero         &   &     &   \MId            &
    \\
       &                  &   &     &                   &
    \\ \hline  
       &                  &   &     &                   &
    \\
       & \MG_{x} \ML_{x}  &   &     &  \MG_{v} \ML_{v}  &
    \\
       &                  &   &     &                   &
\end{array} \right)
\begin{pmatrix}
\mathbf{z}\zero \\
\mathbf{z}\one \\
 \vdots \\
\mathbf{z}^{(p-1)}      \\
\dot{\mathbf{z}}\zero \\
\dot{\mathbf{z}}\one  \\
 \vdots \\
\dot{\mathbf{z}}^{(p-1)}
\end{pmatrix} \;.
\label{eq:linear_sys3}
\end{equation}

The matrices $\Vect{P}_{+}$ and $\Vect{P}_{-}$ are $q \times q$ circulant matrices
and all circulant matrices have a common set of $q$ eigenvectors given by,
\[
\Vect{v}_{m}
  = \left[ 1, \exp\left( \frac{2 \pi i }{ q } m \right),
              \exp\left( \frac{2 \pi i }{ q } 2 m \right),
              \cdots,
              \exp\left( \frac{2 \pi i }{ q } (q-1) m \right)
    \right]^{T}
\]
where $m = 0, 1, \cdots, q-1$.

To simplify the notation, we introduce the following 1st degree polynomials,
\begin{align}
\label{agentseq!def_psi_00}
  \psi_{0}^{(\alpha)}(\nu) &= \left( g_{v}^{(\alpha)} \nu + g_{x}^{(\alpha)} \right)
    &
  \psi_{1}^{(\alpha)}(\nu) &= g_{v}^{(\alpha)} \rho_{v,1}^{(\alpha)} \nu + g_{x}^{(\alpha)} \rho_{x,1}^{(\alpha)}
   &
  \psi_{-1}^{(\alpha)}(\nu) &= g_{v}^{(\alpha)} \rho_{v,-1}^{(\alpha)} \nu + g_{x}^{(\alpha)} \rho_{x,-1}^{(\alpha)}
\end{align}

For all $\alpha, \nu$, we have,
\begin{equation}
\label{rlproofs!sum_psis_vanish}
\psi_{-1}^{(\alpha)}(\nu)
 + \psi_{0}^{(\alpha)}(\nu)
 + \psi_{1}^{(\alpha)}(\nu)
  = 0.
\end{equation}

The eigenvalue equation of the general system is simplified by using the following,

\begin{Proposition}
The eigenvectors of the matrix in equation \ERef{linnn!eq_eigenvect} are given by the $2N= 2 p q$ vectors,
\begin{equation}
\label{linnn!eq_eigenvect}
\Vect{u}_{\nu}( m, \phi )
  = \begin{pmatrix}
    \e_{0} \Vect{v}_{m}  \\ \e_{1} \Vect{v}_{m} \\ \vdots  \\
            \e_{p-1} \Vect{v}_{m}
       \\
    \nu  \e_{0} \Vect{v}_{m}  \\ \nu \e_{1} \Vect{v}_{m} \\ \vdots  \\
             \nu \e_{p-1} \Vect{v}_{m}
  \end{pmatrix}
  = \begin{pmatrix}
     \Vect{v}_{m} \otimes \Vect{\epsilon}      \\
     \nu \left( \Vect{v}_{m} \otimes \Vect{\epsilon} \right)
  \end{pmatrix}
  \;.
\end{equation}
which have eigenvalue $\nu$ and where
\[
\Vect{\epsilon} = \begin{pmatrix}
      \epsilon_{0}  &   \epsilon_{2}  &   \cdots  &   \epsilon_{p-1}
   \end{pmatrix}^{T}.
\]
For each $m \in \{0,\cdots q-1\}$ there are $2p$ eigenvalues
given by the determinant of the matrix in equation \ERef{eq:mat_epsilon_nullity}.
For each of these roots the eigenvector uses the values $\epsilon_{j}$ that satisfy,
\begin{equation}
\label{eq:mat_epsilon_nullity}
\begin{pmatrix}
   \nu^2 + \psi_{0}^{(0)}( \nu)  &  \psi_{1}^{(0)}( \nu)               &       0
                    &   \cdots     &  \psi_{-1}^{(0)}( \nu) e^{- i \phi }
     \\
  \psi_{-1}^{(1)}( \nu)             &   \nu^2 + \psi_{0}^{(1)}( \nu )  &  \psi_{1}^{(1)}( \nu )
                   &   \cdots      &    0
    \\
  \vdots                                &         \vdots                            &   \vdots
                   &  \ddots      &   \vdots
    \\
  \psi_{1}^{(p-1)}( \nu) e^{ i \phi }    &      0                             &      0
                    &    \cdots   &   \nu^2 + \psi_{0}^{(p-1)}( \nu)
\end{pmatrix}
\begin{pmatrix}
\e_0  \\
\e_1  \\
\vdots \\
\e_{p-1}
\end{pmatrix}
=
\begin{pmatrix}
0\\ 0\\ \vdots \\ 0
\end{pmatrix}\;.
\end{equation}
\label{prop:charpoly}
\end{Proposition}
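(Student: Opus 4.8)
The plan is to exploit the two-level structure of the system matrix
\[
 M \;=\; \begin{pmatrix} \MZero & \MId \\ \MG_{x}\ML_{x} & \MG_{v}\ML_{v} \end{pmatrix}
\]
of \ERef{eq:linear_sys3}: an outer $p\times p$ block pattern indexed by the agent type $\alpha$, each of whose blocks is $q\times q$ and is itself either a scalar multiple of $\MId$ or a scalar multiple of one of the cyclic permutations $\MP_{+},\MP_{-}$ of \ERef{permut_matrices}. First I would recall that every $q\times q$ circulant, in particular $\MP_{\pm}$, has the Fourier vectors $\Vect{v}_{m}$ as eigenvectors, with $\MP_{+}\Vect{v}_{m}=e^{i\phi}\Vect{v}_{m}$ and $\MP_{-}\Vect{v}_{m}=e^{-i\phi}\Vect{v}_{m}$ for $\phi=2\pi m/q$. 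It follows that for each fixed $m$ the $2p$-dimensional subspace $W_{m}$ of all vectors whose $\alpha$-th length-$q$ block (in both the position and the velocity part) is a scalar multiple of $\Vect{v}_{m}$ is invariant under $M$, since every block of $M$ sends a vector proportional to $\Vect{v}_{m}$ to another such vector. As $\Complex^{2N}=\bigoplus_{m=0}^{q-1}W_{m}$, it suffices to diagonalize $M$ on each $W_{m}$ separately.

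Next I would write down the $2p\times 2p$ matrix $M_{m}$ representing $M|_{W_{m}}$ in the obvious basis. It keeps the companion form $\begin{pmatrix}\MZero & \MId \\ \widetilde{\MG_{x}\ML_{x}} & \widetilde{\MG_{v}\ML_{v}}\end{pmatrix}$, where $\widetilde{\ML_{\eta}}$ is the $p\times p$ matrix obtained from $\ML_{\eta}$ by replacing each $q\times q$ block by its scalar action on $\Vect{v}_{m}$: $-\MId\mapsto-1$, $-\rho_{\eta,\pm1}^{(\alpha)}\MId\mapsto-\rho_{\eta,\pm1}^{(\alpha)}$, and the two wrap-around blocks built from $\MP_{\pm}$ acquiring factors $e^{\mp i\phi}$. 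For this companion matrix, $M_{m}(w;y)=\nu(w;y)$ forces $y=\nu w$ from the top block row, and the bottom block row then reads $\big(\nu^{2}\MId-\nu\,\widetilde{\MG_{v}\ML_{v}}-\widetilde{\MG_{x}\ML_{x}}\big)w=0$. Carrying out the entrywise bookkeeping and substituting the definitions \ERef{agentseq!def_psi_00}, this last $p\times p$ matrix is exactly the one in \ERef{eq:mat_epsilon_nullity}: its diagonal entries are $\nu^{2}+\psi_{0}^{(\alpha)}(\nu)$, its off-diagonal entries are the $\psi_{\pm1}^{(\alpha)}(\nu)$ (with $\psi_{1}$ just above and $\psi_{-1}$ just below the diagonal), and its two corner entries carry the extra factors $e^{\pm i\phi}$. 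Hence the admissible eigenvalues $\nu$ for a given $m$ are precisely the roots of the determinant of \ERef{eq:mat_epsilon_nullity}, and for each such root the vector $\Vect{\epsilon}$ is any nonzero element of the corresponding kernel; lifting $(\Vect{\epsilon};\nu\Vect{\epsilon})$ back into $W_{m}$ by taking the $\alpha$-th block to be $\epsilon_{\alpha}\Vect{v}_{m}$ reproduces precisely the form \ERef{linnn!eq_eigenvect}.

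To conclude I would count: the determinant of \ERef{eq:mat_epsilon_nullity} is a polynomial in $\nu$ whose only top-degree contribution is the product of the (monic, degree-$2$) diagonal entries $\prod_{\alpha=0}^{p-1}\big(\nu^{2}+\psi_{0}^{(\alpha)}(\nu)\big)$, because every other term in the cofactor expansion uses at least two off-diagonal entries (each of degree at most $1$ in $\nu$) and so has strictly smaller degree; thus the determinant is monic of degree $2p$ and contributes $2p$ eigenvalues, counted with multiplicity, for each $m$. Summing over $m=0,\dots,q-1$ gives $2pq=2N$ eigenvalue--eigenvector pairs, in agreement with $\dim\Complex^{2N}$, so this exhausts the spectrum of $M$. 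I do not expect a genuine obstacle: the proposition is essentially a structural reduction, and the only delicate point is the indexing --- matching the wrap-around blocks of $\ML_{\eta}$ (those containing $\MP_{\pm}$) with the correct corner entries of \ERef{eq:mat_epsilon_nullity} and the right signs of $e^{\pm i\phi}$, and keeping the block ordering of \ERef{eq:linear_sys3} (agent type outside, Fourier index inside) straight throughout. The row-sum identity \ERef{rlproofs!sum_psis_vanish} is not needed in the argument itself, but it gives the reassuring check that $m=0$, $\Vect{\epsilon}=(1,\dots,1)^{T}$, $\nu=0$ always solves \ERef{eq:mat_epsilon_nullity}, the uniform constant-velocity mode.
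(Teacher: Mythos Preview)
Your proposal is correct and follows essentially the same approach as the paper: both use the Fourier eigenbasis $\Vect{v}_{m}$ for the circulant blocks $\MP_{\pm}$, then reduce the eigenvector equation for the companion matrix to the $p\times p$ system \ERef{eq:mat_epsilon_nullity}. The paper states this tersely (``apply the matrix to the vector, use $\MP_{\pm}\Vect{v}_{m}=e^{\pm i\phi}\Vect{v}_{m}$, the top $N$ coordinates follow immediately and the bottom $N$ yield $q$ copies of \ERef{eq:mat_epsilon_nullity}''), whereas you organize the same computation through the explicit invariant-subspace decomposition $\Complex^{2N}=\bigoplus_{m}W_{m}$ and add a monic-degree count to confirm that all $2N$ eigenvalues are accounted for --- a point the paper leaves implicit.
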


\begin{proof}
The proof of this proposition is a generalization
of the proof of proposition 2.1 from \cite{VeermanBaldivieso_2019}.
Apply the matrix in equation \ERef{eq:linear_sys3}
to the vector in equation \ERef{linnn!eq_eigenvect}.
Use,
\[
\begin{array}{cc}
 \mathbf{P}_{-}  \Vect{v}_{m} = e^{- i \phi } \Vect{v}_{m}
 \;,
  &
 \mathbf{P}_{+}  \Vect{v}_{m} = e^{i \phi } \Vect{v}_{m}.
\end{array}
\]
where we define $\phi = \frac{ 2 \pi }{q} m$.
The top $N$ coordinates follow immediately
and the bottom $N$ coordinates yield $q$ copies of the equation \ERef{eq:mat_epsilon_nullity}.
\end{proof}

\begin{Corollary}
\label{nnsystem!cor_eigsofpoly}
The eigenvalues of the system are the roots to the polynomial,
\begin{equation}
\label{nnsystem!cor_eigs_poly}
P_{\phi}(\nu)
 = \det \begin{pmatrix}
   \nu^2 + \psi_{0}^{(0)}( \nu)  &  \psi_{1}^{(0)}( \nu)               &       0
                    &   \cdots     &  \psi_{-1}^{(0)}( \nu) e^{- i \phi }
     \\
  \psi_{-1}^{(1)}( \nu)             &   \nu^2 + \psi_{0}^{(1)}( \nu )  &  \psi_{1}^{(1)}( \nu )
                   &   \cdots      &    0
    \\
  \vdots                                &         \vdots                            &   \vdots
                   &  \ddots      &   \vdots
    \\
  \psi_{1}^{(p-1)}( \nu) e^{ i \phi }    &      0                             &      0
                    &    \cdots   &   \nu^2 + \psi_{0}^{(p-1)}( \nu)
\end{pmatrix}.
\end{equation}
For each value of $\phi = \frac{2 \pi i}{q} m$, $m = 0, \cdots q-1$,
there are $2p$ roots.
\end{Corollary}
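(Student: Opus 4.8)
The plan is to read the statement off from Proposition~\ref{prop:charpoly} with almost no extra work, supplemented by one block-diagonalization remark and an elementary degree count. First I would fix $m\in\{0,\dots,q-1\}$, write $\phi_m=\tfrac{2\pi m}{q}$, and recall from the Proposition that a scalar $\nu$ is an eigenvalue of the system matrix in \ERef{eq:linear_sys3} admitting an eigenvector of the tensor form in \ERef{linnn!eq_eigenvect} precisely when the $p\times p$ matrix in \ERef{eq:mat_epsilon_nullity} has a nontrivial kernel, i.e. precisely when $P_{\phi_m}(\nu)=0$, with the corresponding $\Vect{\epsilon}$ any null vector of that matrix. Thus every root of every $P_{\phi_m}$ is an eigenvalue of the system, and what remains is (a) to show these are \emph{all} the eigenvalues, and (b) to show each $P_{\phi_m}$ contributes exactly $2p$ of them.

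For (a) I would observe that each $q\times q$ block out of which the matrix in \ERef{eq:linear_sys3} is assembled --- namely $\MZero$, $\MId$, the scalar multiples $g_{\eta}^{(\alpha)}\MId$, and the cyclic permutations $\MP_{+},\MP_{-}$ --- is circulant, hence all of them are simultaneously diagonalized by the unitary DFT matrix whose columns are the normalized vectors $\Vect{v}_{m}$. Conjugating the full $2N\times 2N$ matrix by that transformation (tensored with the identity on the remaining indices, and using the reordering already in force) turns it into a block-diagonal matrix with $q$ blocks of size $2p\times 2p$; the block indexed by $m$ is of the same companion form as in \ERef{eq:linear_sys3} but with $\MP_{\pm}$ replaced by $e^{\pm i\phi_m}$, and by the definitions \ERef{agentseq!def_psi_00} a short computation identifies its characteristic polynomial with $P_{\phi_m}(\nu)$. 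Hence the characteristic polynomial of the whole system factors as $\prod_{m=0}^{q-1}P_{\phi_m}(\nu)$, and its root set is exactly $\bigcup_{m}\{\nu:P_{\phi_m}(\nu)=0\}$.

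For (b) I would expand $P_{\phi_m}(\nu)$ by the Leibniz formula. The identity permutation contributes $\prod_{\alpha=0}^{p-1}\bigl(\nu^2+\psi_{0}^{(\alpha)}(\nu)\bigr)$, a monic polynomial of degree exactly $2p$, since each $\psi_{0}^{(\alpha)}$ has degree $1$ in $\nu$. Because the matrix in \ERef{eq:mat_epsilon_nullity} is tridiagonal with two extra corner entries, every other permutation with a nonzero term is either one of the two full cyclic rotations (using $p$ off-diagonal factors $\psi_{\pm1}^{(\alpha)}$ of degree $\le 1$, two of which carry only a unimodular factor $e^{\pm i\phi_m}$, total degree $p$) or a product of disjoint adjacent transpositions (each transposition trading two diagonal factors of degree $2$ for two off-diagonal factors of degree $\le 1$, lowering the degree by at least $2$). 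In all cases the degree is strictly below $2p$, so $P_{\phi_m}$ has degree exactly $2p$ with leading coefficient $1$; by the fundamental theorem of algebra it has $2p$ roots counted with multiplicity, and summing over $m$ gives $2pq=2N$ eigenvalues, i.e. the full spectrum.

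The one place needing care --- the \emph{main obstacle}, such as it is --- is step (a): the Proposition alone only tells us the tensor-form vectors are eigenvectors, not that they span, so one genuinely needs the simultaneous diagonalizability of the circulant blocks to conclude that the characteristic polynomial is the product of the $P_{\phi_m}$'s and nothing more. The degree count of (b) then closes the loop, since it forces the multiplicities to sum to $2N$, leaving no room for eigenvalues off the list. A minor point worth checking explicitly within (b) is that only the identity permutation attains degree $2p$, so the top coefficient is genuinely $1$ rather than an artifact of cancellation.
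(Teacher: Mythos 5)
Your proposal is correct, and its core step --- reading the eigenvalue condition off from Proposition~\ref{prop:charpoly} as the vanishing of the determinant of the matrix in \eqref{eq:mat_epsilon_nullity} --- is exactly how the paper arrives at the corollary, which it states without further proof. Where you go beyond the paper is in supplying the two pieces of rigor it leaves implicit. First, the paper's tacit argument for completeness is a count of $2pq=2N$ tensor-form eigenvectors, which by itself does not rule out eigenvalues outside the list (nor handle the case of repeated roots, where the system matrix need not have a full eigenbasis); your simultaneous diagonalization of the circulant $q\times q$ blocks of \eqref{eq:linear_sys3} by the DFT, yielding the clean factorization $\det(\nu I - M)=\prod_{m=0}^{q-1}P_{\phi_m}(\nu)$, closes that gap and accounts for multiplicities. (One checkpoint worth writing out: the reduction of each $2p\times 2p$ companion block to the $p\times p$ determinant uses $\det\bigl(\begin{smallmatrix}A&B\\C&D\end{smallmatrix}\bigr)=\det(DA-CB)$, valid here because the top blocks $\nu\MId$ and $-\MId$ commute with everything.) Second, your Leibniz-formula degree count justifying that each $P_{\phi_m}$ is monic of degree exactly $2p$ is also absent from the paper; it is correct as stated, modulo the trivial slip that each of the two full cyclic rotations carries one corner factor $e^{\pm i\phi_m}$, not two --- which does not affect the bound of degree $p<2p$ for those terms. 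In short: same route as the paper, but an honest proof rather than an assertion.
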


\begin{Proposition}
\label{rlproofs!poly_phi0_quad}
When $\phi = 0$ (e.g. $m=0$) the constant and linear terms for the polynomial $P_{0}( \nu )$ both vanish.
\end{Proposition}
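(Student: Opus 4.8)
The plan is to set $\phi=0$ and read off the two lowest-order Taylor coefficients of $P_{0}(\nu)=\det M(\nu)$ directly from the identity \ERef{rlproofs!sum_psis_vanish}, using only the row-multilinearity of the determinant; call $M(\nu)$ the matrix under the determinant in \ERef{nnsystem!cor_eigs_poly} evaluated at $\phi=0$.

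First I would record the structure of $M(\nu)$ at $\phi=0$: since $e^{\pm i\phi}=1$, its $\alpha$-th row has $\nu^{2}+\psi_{0}^{(\alpha)}(\nu)$ on the diagonal and off-diagonal entries whose total is $\psi_{-1}^{(\alpha)}(\nu)+\psi_{1}^{(\alpha)}(\nu)$ (for small $p$ the sub-, super-, and wrap-around positions may coincide, but the row sum is unaffected). Hence by \ERef{rlproofs!sum_psis_vanish} every row of $M(\nu)$ sums to $\nu^{2}$, i.e.\ $M(\nu)\Vect{1}=\nu^{2}\Vect{1}$ with $\Vect{1}=(1,\dots,1)^{T}$. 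In particular $M(0)\Vect{1}=0$, so $M(0)$ is singular and the constant term $P_{0}(0)=\det M(0)$ vanishes.

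For the linear term I would differentiate the determinant row by row: $P_{0}'(\nu)=\sum_{\alpha=0}^{p-1}\det M_{\alpha}(\nu)$, where $M_{\alpha}(\nu)$ is $M(\nu)$ with its $\alpha$-th row replaced by the $\nu$-derivative of that row. At $\nu=0$ the $p-1$ unchanged rows are rows of $M(0)$ and so have zero sum; the replaced row has entries $\dot\psi_{-1}^{(\alpha)}(0)$, $\dot\psi_{0}^{(\alpha)}(0)$ (the contribution of $\nu^{2}$ is $2\nu|_{\nu=0}=0$), $\dot\psi_{1}^{(\alpha)}(0)$, whose sum is $\tfrac{d}{d\nu}\bigl(\psi_{-1}^{(\alpha)}+\psi_{0}^{(\alpha)}+\psi_{1}^{(\alpha)}\bigr)\big|_{\nu=0}=0$, again by \ERef{rlproofs!sum_psis_vanish}. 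Thus $M_{\alpha}(0)\Vect{1}=0$ for every $\alpha$, so each $\det M_{\alpha}(0)=0$, and the coefficient of $\nu$, namely $P_{0}'(0)$, vanishes as well. (The same computation at second order fails, since the $\nu^{2}$ on the diagonal contributes a row of the form $(0,2,0)$ with nonzero sum, which is consistent with the statement claiming only two orders.)

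The only real care needed is the bookkeeping in the first step — confirming that collapsing the cyclic tri-diagonal pattern at $\phi=0$ (and, for $p\le 2$, the coincidence of some matrix positions) leaves each row sum equal to exactly $\nu^{2}$; once that is pinned down, both conclusions follow immediately from \ERef{rlproofs!sum_psis_vanish} and its $\nu$-derivative.
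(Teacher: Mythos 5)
Your proof is correct, and it rests on the same key identity as the paper's --- equation \eqref{rlproofs!sum_psis_vanish} forcing the all-ones vector into the kernel --- but the execution is a genuine variant. The paper uses multilinearity in the rows to split off every term containing a diagonal $\nu^{2}$ factor (all of which have degree $\ge 2$) and observes that what remains is the determinant of the matrix with diagonal entries $\psi_{0}^{(\alpha)}(\nu)$, whose rows sum to zero for \emph{every} $\nu$, so that reduced determinant vanishes identically and carries the constant and linear coefficients with it. You instead evaluate the two lowest Taylor coefficients directly: $P_{0}(0)=\det M(0)=0$ because $M(\nu)\mathbf{1}=\nu^{2}\mathbf{1}$, and $P_{0}'(0)=\sum_{\alpha}\det M_{\alpha}(0)=0$ because each row-differentiated matrix $M_{\alpha}(0)$ still annihilates $\mathbf{1}$ (the diagonal contributes $2\nu|_{\nu=0}=0$ to the differentiated row's sum). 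Your version is arguably tighter --- it avoids the paper's slightly informal step of asserting that ``the constant and linear terms are contained in the reduced polynomial'' --- and your parenthetical about why the argument breaks at second order matches the content of Remark \ref{rlproofs!rem_phizero_poly}. One small bonus of your observation $M(\nu)\mathbf{1}=\nu^{2}\mathbf{1}$ that neither proof exploits: since $\nu^{2}$ is then a root of the characteristic polynomial of $M(\nu)$, one can read off that $\nu^{2}$ divides $P_{0}(\nu)$ as a polynomial identity, which gives the Proposition in one line.
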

\begin{proof}
Neither the linear nor constant terms of the polynomial cannot have $\nu^2$ as a factor.
Set $\phi = 0$ and remove the terms with $\nu^2$ as a factor.
The resulting polynomial is,
\[
\det \begin{pmatrix}
   \psi_{0}^{(0)}( \nu)  &  \psi_{1}^{(0)}( \nu)               &       0
                    &   \cdots     &  \psi_{-1}^{(0)}( \nu)
     \\
  \psi_{-1}^{(1)}( \nu)             &   \psi_{0}^{(1)}( \nu )  &  \psi_{1}^{(1)}( \nu )
                   &   \cdots      &    0
    \\
  \vdots                                &         \vdots                            &   \vdots
                   &  \ddots      &   \vdots
    \\
  \psi_{1}^{(p-1)}( \nu)      &      0                             &      0
                    &    \cdots   &   \psi_{0}^{(p-1)}( \nu)
\end{pmatrix}
\]
Every row in this matrix sums to zero, by equation \ERef{rlproofs!sum_psis_vanish}.
This means the vector, consisting of all $1$'s is an eigenvector with eigenvalue $0$
and so the determinant vanishes for all $\nu$.
The constant and linear terms are contained in this reduced polynomial and so must vanish.
\end{proof}

\begin{Remark}
\label{rlproofs!rem_phizero_poly}
The proof of Proposition \ref{rlproofs!poly_phi0_quad}
says the second order term of the polynomial $P_{0}( \nu )$
must contain exactly one of the diagonal $\nu^2$ terms.
The third order term of $P_{0}(\nu)$ must, also, contain exactly one of $\nu^2$ terms.
These facts will be used below.
\end{Remark}

\begin{Proposition}
\label{rlproofs!prop_phi_depends}
\[
P_{\phi}( \nu )
  = s( \nu ) + (-1)^{p} r_{\phi}( \nu ),
\]
where all the $\phi$ dependence is in the polynomial,
\[
r_{\phi}( \nu )  =
      (1 -  e^{ i \phi } )   \left( \psi_{1}^{(0)}( \nu) \psi_{1}^{(1)}( \nu)
           \cdots \psi_{1}^{(p-1)}( \nu)  \right)
    + (1 - e^{ - i \phi } ) \left(  \psi_{-1}^{(0)}( \nu) \psi_{-1}^{(1)}( \nu)
           \cdots \psi_{-1}^{(p-1)}( \nu)   \right)
\]
and $s(\nu)$ has zero constant and linear terms.
\end{Proposition}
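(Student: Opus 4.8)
The plan is to expand the determinant $P_{\phi}(\nu)$ in \ERef{nnsystem!cor_eigs_poly} by cofactors and isolate exactly those terms that carry a factor of $e^{i\phi}$ or $e^{-i\phi}$. The matrix whose determinant defines $P_{\phi}$ is ``cyclic tridiagonal'': the only nonzero entries are the diagonal entries $\nu^2 + \psi_0^{(\alpha)}(\nu)$, the superdiagonal entries $\psi_1^{(\alpha)}(\nu)$, the subdiagonal entries $\psi_{-1}^{(\alpha)}(\nu)$, and the two ``corner'' entries $\psi_{-1}^{(0)}(\nu)e^{-i\phi}$ (top-right) and $\psi_1^{(p-1)}(\nu)e^{i\phi}$ (bottom-left). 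Using the permutation-expansion (Leibniz) formula for the determinant, every nonzero term corresponds to a permutation $\sigma$ of $\{0,\dots,p-1\}$ that uses only entries in these positions. I would argue that such a $\sigma$ must be one of exactly three types: (i) $\sigma = \mathrm{id}$ together with all permutations that move indices only by $\pm 1$ steps among the ``interior'' band (these never touch a corner entry, hence contribute to a $\phi$-independent polynomial); (ii) the single $p$-cycle $\sigma = (0\,1\,2\,\cdots\,p-1)$ realized as ``shift everything up by one,'' which forces the use of the bottom-left corner entry $\psi_1^{(p-1)}(\nu)e^{i\phi}$ and all $p-1$ superdiagonal entries; and (iii) the inverse $p$-cycle, which forces the top-right corner entry $\psi_{-1}^{(0)}(\nu)e^{-i\phi}$ and all $p-1$ subdiagonal entries.

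Next I would compute the contributions of types (ii) and (iii). The $p$-cycle $(0\,1\,\cdots\,p-1)$ has sign $(-1)^{p-1}$, and its term in the Leibniz expansion is $(-1)^{p-1}\,e^{i\phi}\,\prod_{\alpha=0}^{p-1}\psi_1^{(\alpha)}(\nu)$; likewise the inverse cycle contributes $(-1)^{p-1}\,e^{-i\phi}\,\prod_{\alpha=0}^{p-1}\psi_{-1}^{(\alpha)}(\nu)$. So the full $\phi$-dependent part of $P_{\phi}(\nu)$ is
\[
(-1)^{p-1}\Bigl( e^{i\phi}\prod_{\alpha}\psi_1^{(\alpha)}(\nu) + e^{-i\phi}\prod_{\alpha}\psi_{-1}^{(\alpha)}(\nu)\Bigr).
\]
Now I invoke Proposition \ref{rlproofs!poly_phi0_quad}: at $\phi=0$ the constant and linear terms of $P_0(\nu)$ vanish. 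Setting $\phi=0$ in the decomposition $P_{\phi}=s(\nu)+(\phi\text{-dependent part})$, where $s(\nu)$ collects all the type-(i) terms, the identity-permutation term contributes $\prod_{\alpha}(\nu^2+\psi_0^{(\alpha)}(\nu))$ plus lower-band terms; I would extract the would-be constant and linear parts. Equivalently, the cleanest route is: define $s(\nu)$ to be the $\phi$-independent remainder and $r_{\phi}(\nu) = (1-e^{i\phi})\prod_{\alpha}\psi_1^{(\alpha)}(\nu) + (1-e^{-i\phi})\prod_{\alpha}\psi_{-1}^{(\alpha)}(\nu)$, and observe that
\[
P_{\phi}(\nu) - (-1)^{p} r_{\phi}(\nu) = P_{\phi}(\nu) + (-1)^{p-1}r_{\phi}(\nu)
\]
differs from the genuine $\phi$-independent remainder only by the $\phi$-independent quantity $(-1)^{p-1}\bigl(\prod_{\alpha}\psi_1^{(\alpha)}(\nu) + \prod_{\alpha}\psi_{-1}^{(\alpha)}(\nu)\bigr)$; call the total $s(\nu)$. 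Since this expression equals $P_0(\nu)$ when $\phi = 0$ (because $r_0(\nu) = 0$), Proposition \ref{rlproofs!poly_phi0_quad} gives that $s(\nu)$ has vanishing constant and linear terms, which is the claim. One should also double-check the sign: with $r_\phi$ as written, $P_\phi = s + (-1)^p r_\phi$ requires the $\phi$-dependent part of $P_\phi$ to be $(-1)^p\bigl(-e^{i\phi}\prod\psi_1^{(\alpha)} - e^{-i\phi}\prod\psi_{-1}^{(\alpha)}\bigr) = (-1)^{p-1}\bigl(e^{i\phi}\prod\psi_1^{(\alpha)} + e^{-i\phi}\prod\psi_{-1}^{(\alpha)}\bigr)$, matching the Leibniz computation above.

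The main obstacle is the combinatorial step: proving rigorously that the only permutations contributing corner entries are the two full $p$-cycles, and that no permutation uses both corners simultaneously (for $p\ge 3$ using both corners would require the first and last rows each to be ``spent'' on a corner, leaving a sub-permutation of the interior that cannot be completed within the band — this needs a short graph-theoretic argument on the associated directed cycle cover). For $p=2$ the matrix is $2\times 2$ and the corner and band structure coincide, so that degenerate case should be checked separately (there $r_\phi$ reduces correctly since $\psi_1^{(0)}\psi_1^{(1)}$ and $\psi_{-1}^{(0)}\psi_{-1}^{(1)}$ are exactly the off-diagonal product). Once the permutation classification is nailed down, the rest is bookkeeping of signs and an appeal to Proposition \ref{rlproofs!poly_phi0_quad}.
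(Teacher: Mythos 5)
Your proposal follows essentially the same route as the paper: expand the determinant over permutations, observe that the band structure forces any permutation using exactly one corner entry to be a full $p$-cycle, compute the sign $(-1)^{p-1}$, normalize $r_\phi$ so that $r_0 = 0$, and invoke Proposition \ref{rlproofs!poly_phi0_quad} to kill the constant and linear terms of $s(\nu) = P_0(\nu)$. The sign bookkeeping and the identification of $s$ with $P_0$ are correct and match the paper.

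One sub-claim you flag as ``the main obstacle'' is in fact false as stated: it is not true that no permutation uses both corner entries. The transposition $\sigma = (0\;\, p-1)$ with $\sigma(k)=k$ for $1 \le k \le p-2$ is a legitimate contributor to the Leibniz expansion --- it uses both corners and completes the interior with diagonal entries, so the graph-theoretic argument you propose would not go through. The reason this does not damage the proof is that any term using both corners carries the factor $e^{i\phi}e^{-i\phi}=1$ and is therefore $\phi$-independent; it simply gets absorbed into $s(\nu)$. The correct statement (and the one the paper actually uses) is that the only $\phi$-\emph{dependent} terms are those using exactly one corner, and for those the band structure forces the full $p$-cycle. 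With that adjustment your trichotomy of permutations should be amended to include the both-corner class inside the $\phi$-independent part, and the rest of your argument stands.
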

\begin{proof}
The only terms of the expansion of equation \ERef{nnsystem!cor_eigs_poly} that depend on $\phi$
contain either $\psi_{-1}^{(0)}( \nu ) e^{-i\phi}$
or $\psi_{1}^{(p-1)}( \nu ) e^{i\phi}$ but not both.
The determinant is a sum over permutations $\sigma$ of terms $\sgn(\sigma) M_{0 \sigma 0} \cdots M_{p-1 \sigma p-1}$.
The only non-zero permutations have $\sigma(k) \in \{ k-1, k, k+1 \} MOD(p)$.
The permutations that contain the term $\psi_{-1}^{(0)}( \nu ) e^{-i\phi}$
but not $\psi_{1}^{(p-1)}( \nu ) e^{i\phi}$ must have $\sigma(0) = p-1$.
The matrix is tri-diagonal so the value $\sigma(p-1) \in \{ 0, p-1, p-2 \}$ for all $\sigma$.
But, in this case we know $\sigma(p-1) \ne p-1$ and,
by assumption, $\sigma(p-1) \ne 0$ (or the term would contain the $e^{i\phi}$ term).
Therefore, $\sigma(p-1) = p-2$.
By a similar logic, $\sigma(p-2) \in \{ p-3, p-2, p-1 \}$
but $\sigma(p-2) \ne p-2$ and $\sigma(p-2) \ne p-1$.
So we get $\sigma(p-2) = p-3$.
Proceed in this way to get the permutation, $\sigma = \left( 0, p-1, p-2, \cdots 1 \right)$
which has $\sgn( \sigma ) = (-1)^{p-1}$.
This corresponds to the term,
\[
- (-1)^{p} e^{ - i \phi }
  \psi_{-1}^{(0)}( \nu) \cdots \psi_{-1}^{(p-1)}( \nu)
\]
The term that contains $\psi_{1}^{(p-1)}( \nu ) e^{i\phi}$
but not $\psi_{-1}^{(0)}( \nu ) e^{-i\phi}$ is computed in a similar way,
and seen to be,
\[
- (-1)^{p} e^{ i \phi }
  \psi_{1}^{(0)}( \nu) \cdots \psi_{1}^{(p-1)}( \nu)
\]

We define $r_{\phi}$ so that $r_{0}(\nu) = 0$.
\newline

From Proposition \ref{rlproofs!poly_phi0_quad}
it follows immediately that the constant and linear terms of $s(\nu)$ both vanish.
\end{proof}

A consequence of this Proposition is the following.
\begin{Corollary}
\label{agentseq!cor_deriv_pphi}
\[
\left. \frac{ d^{k} P_{\phi} }{ d \phi^{k}  } \right|_{\phi = 0}
  = (-1)^{p+1} i^{k}     \left( \psi_{1}^{(0)}( \nu) \psi_{1}^{(1)}( \nu)
           \cdots \psi_{1}^{(p-1)}( \nu)  \right)
    + (-1)^{p+1} (-i)^{k} \left(  \psi_{-1}^{(0)}( \nu) \psi_{-1}^{(1)}( \nu)
           \cdots \psi_{-1}^{(p-1)}( \nu)   \right)
\]
\end{Corollary}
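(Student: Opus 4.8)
The plan is to differentiate the decomposition of $P_\phi(\nu)$ supplied by Proposition \ref{rlproofs!prop_phi_depends} directly in $\phi$, holding $\nu$ fixed. Write $P_\phi(\nu) = s(\nu) + (-1)^p r_\phi(\nu)$ with
\[
r_\phi(\nu) = (1 - e^{i\phi})\,\Pi_{+}(\nu) + (1 - e^{-i\phi})\,\Pi_{-}(\nu),
\]
where $\Pi_{+}(\nu) = \psi_{1}^{(0)}(\nu)\cdots\psi_{1}^{(p-1)}(\nu)$ and $\Pi_{-}(\nu) = \psi_{-1}^{(0)}(\nu)\cdots\psi_{-1}^{(p-1)}(\nu)$. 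The key observation is that, for fixed $\nu$, the term $s(\nu)$ and the two products $\Pi_{\pm}(\nu)$ carry no $\phi$ dependence whatsoever; all of it sits in the scalar prefactors $1 - e^{\pm i\phi}$.

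Then I would simply apply $\frac{d^k}{d\phi^k}$ for $k \ge 1$. Since $s(\nu)$ is constant in $\phi$ it drops out, and $\frac{d^k}{d\phi^k}\bigl(1 - e^{\pm i\phi}\bigr) = -(\pm i)^k e^{\pm i\phi}$, which at $\phi = 0$ equals $-(\pm i)^k$, i.e. $-i^k$ for the $+$ branch and $-(-i)^k$ for the $-$ branch. Multiplying through by $(-1)^p$ turns the leading $-1$ into $(-1)^{p+1}$, giving
\[
\left. \frac{d^{k} P_{\phi}}{ d \phi^{k} } \right|_{\phi = 0} = (-1)^{p+1} i^{k}\,\Pi_{+}(\nu) + (-1)^{p+1} (-i)^{k}\,\Pi_{-}(\nu),
\]
which is exactly the claimed identity. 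For $k = 0$ the formula would fail because $s(\nu)$ itself does not vanish — only its constant and linear coefficients do — so the statement is implicitly for $k \ge 1$, and I would state that restriction explicitly.

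There is essentially no obstacle here: the content of the corollary is entirely front-loaded into Proposition \ref{rlproofs!prop_phi_depends}, and what remains is the one-line differentiation of $1 - e^{\pm i\phi}$. The only points worth double-checking are the sign bookkeeping, namely that $(-1)^{p}\cdot(-1) = (-1)^{p+1}$ and that the $-$ branch is recorded as $(-i)^k$ (equal to $(-1)^k i^k$, so the choice of form is cosmetic), and the fact that the expansion is valid only for $k \ge 1$.
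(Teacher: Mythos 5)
Your proof is correct and is exactly the argument the paper intends: the corollary is stated as an immediate consequence of Proposition \ref{rlproofs!prop_phi_depends}, and differentiating the prefactors $1 - e^{\pm i\phi}$ is the whole content. Your observation that the identity holds only for $k \ge 1$ (since $s(\nu)$ survives at $k=0$) is a worthwhile clarification that the paper leaves implicit.
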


From this we can prove a necessary condition for stability.
\begin{Theorem}
\label{nnsystem!thm_nec_for_stab}
If, for a general (linear) nearest neighbor system,
\begin{align*}
\prod_i\;\rho_{x,1}^{(i)} -  \prod_i\;\rho_{x,-1}^{(i)} \neq 0
\end{align*}
the system is unstable in one sense or another.
\label{thm:general_neccond}
\end{Theorem}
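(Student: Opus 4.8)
The plan is to follow the two eigenvalues that branch off the double root of $P_{0}$ at $\nu=0$ as $\phi$ is perturbed away from $0$, and to exhibit one of them in the open right half–plane. By Corollary \ref{nnsystem!cor_eigsofpoly} every root of $P_{\phi}$ (for every admissible $\phi=\frac{2\pi}{q}m$) is an eigenvalue of the matrix in \ERef{eq:linear_sys3}, so producing a root with strictly positive real part proves instability.

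First I would localise $P_{\phi}(\nu)$ near $(\nu,\phi)=(0,0)$. By Proposition \ref{rlproofs!prop_phi_depends}, $P_{\phi}(\nu)=s(\nu)+(-1)^{p}r_{\phi}(\nu)$ where $s$ has vanishing constant and linear terms; since the $\psi_{j}^{(\alpha)}$ have real coefficients when $\phi=0$, we may write $s(\nu)=a_{2}\nu^{2}+O(\nu^{3})$ with $a_{2}\in\Real$. Using Corollary \ref{agentseq!cor_deriv_pphi} with $k=1$ at $\nu=0$ and $\psi_{\pm1}^{(\alpha)}(0)=g_{x}^{(\alpha)}\rho_{x,\pm1}^{(\alpha)}$,
\[
b\;:=\;\left.\frac{\partial P_{\phi}}{\partial\phi}\right|_{\phi=0,\ \nu=0}
=(-1)^{p+1}\,i\Bigl(\prod_{i}g_{x}^{(i)}\Bigr)\Bigl(\prod_{i}\rho_{x,1}^{(i)}-\prod_{i}\rho_{x,-1}^{(i)}\Bigr).
\]
Because each $g_{x}^{(i)}>0$, the hypothesis is exactly the statement $b\neq0$, and $b$ is purely imaginary. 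Hence, near the origin, $P_{\phi}(\nu)=a_{2}\nu^{2}+b\,\phi+(\text{higher order in }\nu,\phi)$.

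Second, assume $a_{2}\neq0$. A Rouch\'e/continuity argument shows $P_{\phi}$ has exactly two roots near $\nu=0$ for small $\phi$, and the Newton–polygon balance $a_{2}\nu^{2}+b\phi=0$ gives them to leading order as $\nu\approx\pm\sqrt{-(b/a_{2})\,\phi}$ (the omitted $\phi\nu$ and $\nu^{3}$ contributions are $O(\phi^{3/2})$ on this branch, hence negligible against $a_{2}\nu^{2}\sim\phi$). Since $-b/a_{2}$ is a nonzero purely imaginary number, write $-b/a_{2}=ic$ with $c\in\Real\setminus\{0\}$; then $ic\phi$ lies on the imaginary axis for every $\phi\neq0$, and exactly one of its square roots has argument $\pm\pi/4$, so real part $\approx\frac{1}{\sqrt{2}}\sqrt{|c\phi|}>0$. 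Therefore for all sufficiently small $\phi\neq0$ — in particular for $\phi=2\pi/q$ once $q$ is large enough, which is the regime of the extended periodic boundary condition — $P_{\phi}$ has a root with positive real part, and the system is exponentially unstable.

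Finally, the degenerate case $a_{2}=0$: then $P_{0}$ has a root of multiplicity at least three at $\nu=0$ (its constant and linear terms already vanish by Proposition \ref{rlproofs!poly_phi0_quad}), while the eigenvalue $0$ of the matrix in \ERef{eq:linear_sys3}, within the $m=0$ Fourier sector, has a one–dimensional eigenspace — spanned by the rigid configuration, since $\MG_{x}$ is invertible and $\ker\ML_{x}$ restricted to that sector is one–dimensional. Hence the Jordan block at $0$ has size $\ge3$, the corresponding solutions grow at least like $t^{2}$, and the system is again unstable, now in the sense of (polynomial) marginal instability — this accounts for the ``in one sense or another'' in the statement. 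I expect the main obstacle to be the bookkeeping of this last case (pinning down the geometric multiplicity) together with making the bifurcation estimate $\nu\approx\pm\sqrt{-(b/a_{2})\phi}$ genuinely rigorous; the evaluation of $b$ via Corollary \ref{agentseq!cor_deriv_pphi} and the half–plane conclusion are then immediate.
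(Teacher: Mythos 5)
Your proposal is correct in substance, and it takes a more self-contained route than the paper. The paper's own proof is very short: it \emph{cites} the Appendix of \cite{VeermanBaldivieso_2019} for the fact that stability forces the constant term of $\left.\frac{dP_{\phi}}{d\phi}\right|_{\phi=0}$ to vanish, and then merely evaluates that constant term via Corollary \ref{agentseq!cor_deriv_pphi}, observing it equals $(-1)^{p+1}i\bigl(\prod_i g_x^{(i)}\bigr)\bigl(\prod_i\rho_{x,1}^{(i)}-\prod_i\rho_{x,-1}^{(i)}\bigr)$. You compute the same quantity $b$ the same way, but instead of citing the necessary condition you re-derive it: the Newton--polygon balance $a_{2}\nu^{2}+b\phi=0$ with $a_{2}$ real and $b$ purely imaginary forces one branch of the split double root into the open right half-plane for small $\phi\neq 0$. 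This is almost certainly the content of the cited appendix, so what your version buys is transparency (the reader sees \emph{why} the condition is necessary, and in what sense the system is unstable --- an eigenvalue with real part $\sim\sqrt{|\phi|}>0$ once $q$ is large enough that $\phi=2\pi/q$ is small), at the cost of having to police the degenerate case $a_{2}=a_{20}=0$, which the paper's citation silently absorbs.

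Two caveats on your handling of that degenerate case. First, your claim that the geometric multiplicity of the eigenvalue $0$ is one rests on $\ker \ML_x$ being one-dimensional, which can fail if some $\rho_{x,\pm1}^{(\alpha)}=0$ disconnects the coupling graph; you flag this yourself, but it is the one genuinely unproven step. Second, you could avoid the Jordan-block bookkeeping entirely by staying with the Newton polygon: if $a_{20}=0$ but $a_{30}\neq 0$, the balance $a_{30}\nu^{3}+b\phi=0$ produces three branches $\nu\sim(-b\phi/a_{30})^{1/3}$, i.e.\ cube roots of a nonzero purely imaginary number, at least one of which has strictly positive real part --- giving exponential instability in this case too, by the same argument as in the main case. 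With either repair the proof is complete.
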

\begin{proof}
By \cite{VeermanBaldivieso_2019} (specifically, see Appendix in \cite{VeermanBaldivieso_2019}),
the constant term of $\left. \frac{ d P_{\phi} }{ d \phi  } \right|_{\phi = 0}$ must vanish.
By Corollary \ref{agentseq!cor_deriv_pphi} the derivative of the constant term is,
\[
 (-1)^{p+1} i  \left(
       \rho_{x,1}^{(0)} \rho_{x,1}^{(1)}( \nu)  \cdots \rho_{x,1}^{(p-1)}
   -   \rho_{x,-1}^{(0)} \rho_{x,-1}^{(1)} \cdots \rho_{x,-1}^{(p-1)}
    \right)
\]
The $\rho_{x,1}^{(\alpha)}, \rho_{x,-1}^{(\alpha)}$ are all real so the theorem follows.
\end{proof}

In this general case it is difficult
to come up with sufficient conditions for stability.
If we simplify the problem a bit there is more that can be said.

\section{$L_x$ and $L_{v}$ are Symmetric Laplacians}
\label{sec!symlap}

In \cite{VeermanSigVel_2016} we showed that when $p = 1$
then stable systems must have $\rho_{x,1} = \rho_{x,-1} = -1/2$.
Theorem \ref{thm:general_neccond} indicates that this is a reasonable assumption
and so, henceforth, we shall assume that,
\begin{equation}
\label{symlap!assume_rhox_equal}
  \rho_{x,1}^{(\alpha)} = \rho_{x,-1}^{(\alpha)} = - \frac{1}{2}
       \RText{ for all }
       \alpha = 0, \cdots p-1.
\end{equation}
To make the problem tractable we shall also assume that,
\begin{equation}
\label{symlap!assume_rhov_equal}
  \rho_{v,1}^{(\alpha)} = \rho_{v,-1}^{(\alpha)} = - \frac{1}{2},
       \RText{ for all }
       \alpha = 0, \cdots p-1.
\end{equation}

With these two assumptions $L_x$ and $L_v$ are symmetric row-sum zero matrices.
With these assumptions we, also, have,
\begin{equation}
\label{symlap!assume_psi_equal}
 \psi_{1}^{(\alpha)}( \nu ) = \psi_{-1}^{(\alpha)}( \nu )
       \RText{ for all }
       \alpha = 0, \cdots p-1
       \RText{ and for all }
       \nu.
\end{equation}

One nice feature of the symmetric case is that one can prove stability in a restrictive sense
using the following fact.

\begin{Proposition}
\label{symlap!prop_Mstab_GMStab}
If $M$ is a diagonalizable matrix with eigenvalues in the left half complex plane
and $G$ is a positive definite matrix
then $GM$ has eigenvalues in the left half complex plane.
\end{Proposition}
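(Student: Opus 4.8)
The plan is to reduce the claim about $GM$ to a statement about a \emph{symmetric} (in fact Hermitian-like) matrix by exploiting the positive definiteness of $G$. The key observation is that $G$ has a positive definite square root $G^{1/2}$, and $GM$ is similar to $G^{1/2} M G^{1/2}$ via the conjugation $GM = G^{1/2} \bigl( G^{1/2} M G^{1/2} \bigr) G^{-1/2}$. Similar matrices have the same eigenvalues, so it suffices to show that $G^{1/2} M G^{1/2}$ has all eigenvalues in the open left half plane. This is the first step and it is purely formal.

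Next I would try to control the eigenvalues of $N := G^{1/2} M G^{1/2}$ using the hypothesis on $M$. Here one must be a little careful: $M$ is only assumed diagonalizable with spectrum in the left half plane, not symmetric, so $N$ need not be symmetric either, and conjugation by $G^{1/2}$ does not in general preserve the spectrum of a non-normal matrix in an obvious way. The cleanest route is to pass through the field of values / numerical range, or better, to use the Lyapunov characterization of left-half-plane spectra: since $M$ is diagonalizable with eigenvalues having negative real part, there is a positive definite matrix $P$ with $M^{*} P + P M \prec 0$ (take $P = \sum$ of rank-one projectors onto eigenvectors against their duals, or invoke the standard Lyapunov lemma). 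Then for $GM$ one checks that $Q := G^{-1} P G^{-1}$, or some similar explicit conjugate of $P$, serves as a Lyapunov certificate for $GM$: compute $(GM)^{*} Q + Q (GM)$ and verify it is negative definite using $G^{*} = G \succ 0$ and the corresponding inequality for $M$. Any matrix admitting such a certificate has spectrum strictly in the left half plane, which is exactly the conclusion.

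The main obstacle is getting the Lyapunov bookkeeping to close cleanly: one needs the right conjugate of $P$ so that the cross terms organize into $G^{1/2}(M^{*}P' + P'M)G^{1/2}$ or an analogous manifestly-negative-definite expression, and this requires keeping track of where $G$, $G^{-1}$, and their square roots land. An alternative that sidesteps Lyapunov entirely: observe that the eigenvalues of $GM$ are the generalized eigenvalues of the pencil $(M, G^{-1})$, i.e. $M v = \lambda G^{-1} v$; pairing with $v^{*}$ (or with the appropriate left eigenvector) and using $G^{-1} \succ 0$ lets one read off the sign of $\operatorname{Re}\lambda$ from the corresponding sign for $M$. I expect the pencil/Lyapunov argument to be short once the correct quadratic form is identified; the diagonalizability hypothesis on $M$ is exactly what guarantees such a form exists (it fails for defective $M$ with eigenvalues on the imaginary axis, which is why it is assumed). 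Finally I would remark that the statement and proof are insensitive to whether ``left half plane'' is meant open or closed provided ``diagonalizable'' is retained, and note that in the application $M = \ML_\eta$ is symmetric so the hypotheses are easily met.
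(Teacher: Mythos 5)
Your reduction $GM \sim G^{1/2} M G^{1/2}$ is correct, and you are right to flag that the non-normality of $M$ is where the difficulty lives --- but neither of your two routes can be completed, because the statement is false under the stated hypotheses. The Lyapunov bookkeeping does not close: with $Q = G^{-1}PG^{-1}$ you get $(GM)^{*}Q + Q(GM) = M^{*}PG^{-1} + G^{-1}PM$, which is not a congruence transform of $M^{*}P+PM$ and has no definite sign in general. The pencil argument has the same problem: from $Mv = \lambda G^{-1}v$ you get $\mathrm{Re}(\lambda) = \mathrm{Re}(v^{*}Mv)/(v^{*}G^{-1}v)$, so you need $\mathrm{Re}(v^{*}Mv) \le 0$, i.e.\ a condition on the \emph{numerical range} of $M$, and for a non-normal diagonalizable matrix the numerical range is not controlled by the spectrum. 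Concretely, take
\[
M = \begin{pmatrix} -1 & 4 \\ 0 & -2 \end{pmatrix}, \qquad G = \begin{pmatrix} 1 & 0.9 \\ 0.9 & 1 \end{pmatrix}.
\]
Then $M$ is diagonalizable with eigenvalues $-1,-2$ and $G \succ 0$, but $GM = \left(\begin{smallmatrix} -1 & 2.2 \\ -0.9 & 1.6 \end{smallmatrix}\right)$ has trace $0.6>0$ and determinant $0.38>0$, so both of its eigenvalues have real part $0.3$. Since $GM$ can be unstable, no Lyapunov certificate for it can exist, whatever conjugate of $P$ you try.

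The hypothesis that actually makes your pencil argument work is that the numerical range of $M$ lies in the closed left half plane, equivalently $M + M^{*} \preceq 0$; then $Q = G^{-1}$ is already the certificate, since $(GM)^{*}G^{-1} + G^{-1}(GM) = M^{*} + M \preceq 0$, and the proof is two lines. For what it is worth, the paper's own proof is your quadratic-form argument in disguise: it writes $\langle GMy,y\rangle = \langle G^{1/2}MG^{-1/2}x,x\rangle$ and then infers $\mathrm{Re}\,\langle GMy,y\rangle \le 0$ from the fact that $G^{1/2}MG^{-1/2}$ has the same eigenvalues as $M$ --- exactly the inference you (correctly) hesitated over, valid only when the numerical range is determined by the spectrum, e.g.\ for normal $M$. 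Your closing remark that in the application $M$ is symmetric would rescue matters, but it is not accurate: in Theorem \ref{symlap!thm_LvLx_stable} the proposition is applied with $M = \left(\begin{smallmatrix} \MZero & \MId \\ L_x & \alpha L_v \end{smallmatrix}\right)$, which is neither symmetric nor dissipative, so the gap persists there as well.
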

\begin{proof}
Variants of this Proposition are known.
We include the proof for completeness.
If $G$ is a positive definite matrix then there is a non-singular square root $G^{1/2}$.
For any $x$ there is a $y$ with $x = G^{1/2} y$.
So, for any $x$ we have,
\begin{align*}
\langle { G M y, y } \rangle
  &= \langle { G^{1/2} G^{1/2} M y, y } \rangle
   = \langle { G^{1/2} M y, G^{1/2} y } \rangle
   = \langle { G^{1/2} M G^{-1/2} G^{1/2} y, G^{1/2} y } \rangle
    \\
  &= \langle { G^{1/2} M G^{-1/2} x, x } \rangle.
\end{align*}
The eigenvalues of $G^{1/2} M G^{-1/2}$ are the same as $M$.
So, for any $y$ we have,
\[
\Re \left( \langle { G M y, y } \rangle \right)
  \le 0.
\]
\end{proof}

\begin{Theorem}
\label{symlap!thm_LvLx_stable}
Let $L = L_x = L_{v}$ be a diagonalizable Laplacian
with eigenvalues in the closed left half plane.
Let $G_x = G_v = G$ be a diagonal positive matrix and let $\alpha > 0$ be a real number.
Then the roots of the characteristic polynomial of,
\begin{equation}
 \begin{pmatrix}
    \MZero       &    \MId
    \\
   G L_x        &  \alpha G L_v
 \end{pmatrix},
\end{equation}
all lie in the closed left half plane.
There is a double root at $0$ and all other roots have real part that is strictly negative.
\end{Theorem}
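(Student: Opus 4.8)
The plan is to reduce the $2N \times 2N$ block matrix to a one-parameter family of $2\times 2$ systems indexed by the eigenvalues of $L$, and then invoke the classical Routh--Hurwitz criterion for quadratics. First I would write $M = GL$; by Proposition \ref{symlap!prop_Mstab_GMStab} applied to $L$ (which is diagonalizable with spectrum in the closed left half plane) and the positive definite $G$, the matrix $M$ has all eigenvalues in the closed left half plane. Moreover $M = G^{1/2}(G^{1/2}LG^{1/2})G^{-1/2}$ is similar to the symmetric matrix $\tilde L := G^{1/2} L G^{1/2}$, which has \emph{real} eigenvalues; hence every eigenvalue $\mu$ of $M$ is real and satisfies $\mu \le 0$. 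Since $L$ is a row-sum-zero Laplacian, $L\mathbf{1}=0$, so $\mu = 0$ occurs (at least once), and under the standing connectivity assumptions (the coupling graph is a single cycle through all $N$ agents) the kernel is exactly one-dimensional, so $0$ is a simple eigenvalue of $M$ and all remaining $\mu$ are strictly negative.

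Next I would diagonalize. Because $G_x L_x = G_v L_v = M$, the block matrix is
\[
\mathcal{A} = \begin{pmatrix} \MZero & \MId \\ M & \alpha M \end{pmatrix},
\]
and if $S^{-1} M S = D = \diag(\mu_1,\dots,\mu_N)$ then conjugating $\mathcal{A}$ by $\diag(S,S)$ (after an appropriate coordinate permutation) block-diagonalizes it into $N$ blocks of the form $\begin{pmatrix} 0 & 1 \\ \mu_j & \alpha\mu_j \end{pmatrix}$. The characteristic polynomial of $\mathcal{A}$ is therefore $\prod_{j=1}^{N}\bigl(\nu^2 - \alpha\mu_j\,\nu - \mu_j\bigr)$. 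For the block with $\mu_j = 0$ we get $\nu^2 = 0$, contributing the asserted double root at $0$. For each block with $\mu_j < 0$, set $\mu_j = -\lambda$ with $\lambda > 0$: the factor is $\nu^2 + \alpha\lambda\,\nu + \lambda$, a real quadratic with both coefficients $\alpha\lambda$ and $\lambda$ strictly positive, so by Routh--Hurwitz (or just the quadratic formula: the sum of roots is $-\alpha\lambda<0$ and the product is $\lambda>0$) both roots have strictly negative real part. Assembling the factors gives a double root at $0$ and all other $2N-2$ roots strictly in the open left half plane.

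The main obstacle, and the step deserving the most care, is the reduction itself: $\mathcal A$ need not be diagonalizable (indeed at $\mu_j=0$ it has a nontrivial Jordan block), so I should phrase the argument purely at the level of characteristic polynomials rather than claiming a similarity to a diagonal matrix. Concretely, I would verify the factorization $\det(\nu \MId_{2N} - \mathcal A) = \det\bigl(\nu^2 \MId_N - \alpha\nu M - M\bigr)$ by the standard Schur-complement identity for block matrices $\det\begin{pmatrix} A & B \\ C & D\end{pmatrix} = \det(A)\det(D - CA^{-1}B)$ applied with a limiting/continuity argument in $\nu$ (or directly via row operations valid for all $\nu$), and then use $\det(\nu^2\MId - \alpha\nu M - M) = \prod_j (\nu^2 - \alpha\mu_j\nu - \mu_j)$, which holds because $M$ is similar to a diagonal matrix with entries $\mu_j$. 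A secondary point worth stating explicitly is \emph{why} the kernel of $M$ (equivalently of $\tilde L$) is exactly one-dimensional; this follows from $\tilde L = G^{1/2} L G^{1/2}$ having the same rank as $L$, together with the hypothesis that $L$ is a (connected) Laplacian. Once these two points are nailed down, the Routh--Hurwitz step is immediate.
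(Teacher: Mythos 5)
Your proof is correct, but it takes a genuinely different route from the paper's. The paper argues in two stages: it first quotes the explicit eigenvalue formula $\nu_{m\pm}=\tfrac{\lambda_{v,m}}{2}\pm\sqrt{\tfrac{\lambda_{v,m}^2}{4}+\lambda_{x,m}}$ for the unscaled system \eqref{symlap!eq_LxLv_GisI}, available because $L_x$ and $\alpha L_v$ are commuting symmetric matrices, and then writes the scaled system as $\diag(\MId,G)$ times the unscaled block matrix and invokes Proposition \ref{symlap!prop_Mstab_GMStab} on that $2N\times 2N$ product. You instead push the positive-definite conjugation down to the $N\times N$ level: $GL$ is similar to $G^{1/2}LG^{1/2}$, which is symmetric and negative semidefinite by congruence with $L$, so the characteristic polynomial factors as $\prod_j(\nu^2-\alpha\mu_j\nu-\mu_j)$ with each $\mu_j$ real and $\le 0$, and Routh--Hurwitz finishes each quadratic. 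Your route buys three things: it avoids applying Proposition \ref{symlap!prop_Mstab_GMStab} to the full block matrix, which is not diagonalizable (it has a Jordan block at $0$), so that proposition's hypotheses are not literally met in the paper's final step; it pins down the exact multiplicity of the root at $0$, provided one adds (as you do) the connectivity hypothesis making $\ker L$ one-dimensional, a point the theorem statement leaves implicit; and it gives strict negativity of the remaining $2N-2$ roots directly from the sign pattern of the quadratic coefficients. Two minor remarks: your argument, like the paper's, actually uses that $L$ is \emph{symmetric} rather than merely diagonalizable as the theorem literally states --- without symmetry $G^{1/2}LG^{1/2}$ need not have real eigenvalues and the Routh--Hurwitz step fails --- so that hypothesis should be stated explicitly; and your caution about $\mathcal{A}$ being non-diagonalizable is unnecessary for your own construction, since conjugating by $\diag(S,S)$ only requires $M=GL$ to be diagonalizable, which it is.
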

\begin{proof}
In \cite{VeermanSigVel_2016} the stability conditions are derived for Linear systems with a matrix,
\begin{equation}
\label{symlap!eq_LxLv_GisI}
 \begin{pmatrix}
    \MZero      &    \MId
    \\
    L_x        &    \alpha L_v
 \end{pmatrix}.
\end{equation}
Because $L_x$ and $\alpha L_v$ are commuting symmetric matrices
there is a complete set of eigenvectors with eigenvalues $\lambda_{x,m}$ and $\lambda_{v,m}$
with $\Re( \lambda_{x,m} ) \le 0$ and $\Re( \lambda_{v,m} ) \le 0$.
The eigenvalues of the matrix in equation \ERef{symlap!eq_LxLv_GisI} are given by,
\[
\nu_{m \pm}
  = \frac{ \lambda_{v,m}}{2} \pm \sqrt{ \frac{ \lambda_{v,m}^2}{4} + \lambda_{x,m} }
\]
These roots are all in the closed left half plane.

If $G$ is a diagonal positive matrix then
\[
 \begin{pmatrix}
    \MZero     &    \MId
    \\
    G L_x      &     G L_v
 \end{pmatrix}
  =  \begin{pmatrix}
    \MId       &    \MZero
    \\
   \MZero      &     G
 \end{pmatrix}  \begin{pmatrix}
    \MZero      &    \MId
    \\
     L_x        &     L_v
 \end{pmatrix}
\]
Now use Proposition \ref{symlap!prop_Mstab_GMStab}.
\end{proof}

\begin{Remark}
With the assumptions in Theorem \ref{symlap!thm_LvLx_stable} we have,
\[
 \left[ G L_x, \alpha G L_v \right] = 0.
\]
\end{Remark}

\section{Characteristic Polynomial Expansion}
\label{charpolyexpand!start}

The roots of the characteristic polynomial of the general system, described in Section \ref{sec!nnsystem},
are given by the roots of a series of $p$ degree polynomials $P_{\phi}( \nu )$,
as described in Corollary \ref{nnsystem!cor_eigsofpoly}.
The characteristic polynomial $P_{\phi}( \nu )$ has a root of multiplicity $2$ at $z = 0$.
In this section we will
assume equations \ERef{symlap!assume_rhox_equal} and \ERef{symlap!assume_rhov_equal},
so that $\rho_{x,1}^{(\alpha)} = \rho_{x,-1}^{(\alpha)}$
and $\rho_{v,1}^{(\alpha)} = \rho_{v,-1}^{(\alpha)}$.
We will not assume $G_{x} = \alpha G_{y}$, as in Theorem \ref{symlap!thm_LvLx_stable}.
Instead we will let $g_{x}^{\alpha}$ and $g_{v}^{\alpha}$ be independent random variables.

If the system is stable, then roots of the characteristic polynomial
all have non-positive real parts.
Stable roots with large negative real part decay quickly
so a stable system is dominated by roots that lie near the imaginary axis.
In our system roots near the double zero will dominate the dynamics
so we will expand around this zero to approximate the system dynamics.
The details of the expansion are outlined in this section.

When $\phi = 0$ there is a double root of $P_{0}( \nu )$ at $\nu = 0$.
We would like to compute what happens to this double zero
when $\phi = \frac{ 2 \pi }{q} m$ is small but non-zero.
If $q$ is large enough we can approximate the system by using a continuous variable for $\phi$.
With this approximation, each of the roots at $\phi = 0, \nu = 0$
form continuous zero loci as $\phi$ varies.
We get two zero loci, which are functions,
\begin{equation}
\label{charpolyexpand!eq_gammaCurve}
 \gamma : I \to \Complex,
\end{equation}
where $I = (-\epsilon, +\epsilon)$ is some neighborhood of $0$.
These curves satisfy,
\begin{equation}
\label{charpolyexpand!eq_Pphigamma}
  P_{\phi}( \gamma( \phi )) = 0.
\end{equation}

The coefficients of the characteristic polynomial are analytic functions of $\phi$
so we will expand everything in Taylor series and use this equation to deduce conditions on the coefficients.
Assume that $\gamma(0) = 0$ and write the expansion,
\begin{equation}
\label{charpolyexpand!eq_gamma_taylor}
\gamma( \phi ) = \gamma'(0) \phi + \frac{1}{2} \gamma''(0) \phi^2 + \cdots.
\end{equation}
The coefficients of the polynomial $p_{\phi}( \nu )$ are real analytic functions of $\phi$.
This means we can expand each of them in a Taylor series.
The result is an expansion of the form,
\begin{equation}
\label{charpolyexpand!eq_Pphi_expand}
 P_{\phi}( \nu )
   = \left( a_{00} + a_{01} \phi + \cdots \right)
         + \left( a_{10} + a_{11} \phi + \cdots \right) \nu
         + \left( a_{20} + a_{21} \phi + \cdots \right) \nu^2
         + \cdots
\end{equation}
where the coefficients $a_{0k}, a_{1k}, \cdots,$ arise as the coefficients
of the $k$th derivative of $p_{\phi}( \nu )$ with respect to $\phi$,
\begin{equation}
\frac{1}{k!} \left. \frac{ d^{k} P_{\phi}}{ d \phi^{k} } \right|_{\phi = 0}
    = a_{0k} + a_{1k} \nu + a_{2k} \nu^2 + \cdots.
\end{equation}

\begin{Proposition}
\label{agentseq!prop_gamma_expand}
The expansion of the zero locus to second order gives the coefficients,
\begin{align}
\label{agentseq!taylor_solve_c1}
  \gamma'(0) &= \pm \sqrt{ - \frac{ a_{02} }{  a_{20} } }
       \\
\label{agentseq!taylor_solve_c2}
  \gamma''(0) &=
   - \frac{ a_{30} (\gamma'(0))^3 + a_{21} ( \gamma'(0))^2 + a_{12} \gamma'(0) + a_{03} }
          { a_{20} \gamma'(0) }
\end{align}
\end{Proposition}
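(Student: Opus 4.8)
The plan is to substitute the Taylor expansion \ERef{charpolyexpand!eq_gamma_taylor} of $\gamma$ into the identity \ERef{charpolyexpand!eq_Pphigamma}, namely $P_{\phi}(\gamma(\phi)) = 0$, using the double-series expansion \ERef{charpolyexpand!eq_Pphi_expand} of $P_{\phi}(\nu)$, and then collect terms order by order in $\phi$. Since the identity holds for all small $\phi$, every coefficient of $\phi^{k}$ in the resulting composite series must vanish, and these vanishing conditions determine $\gamma'(0)$, $\gamma''(0)$, and so on recursively.

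First I would record the normalizing observations: $\gamma(0) = 0$ forces $a_{00} = 0$ (the $\phi^{0}$ coefficient), and by Proposition \ref{rlproofs!poly_phi0_quad} we in fact have $a_{00} = a_{10} = 0$, so the $\phi^{0}$ equation is automatically satisfied; moreover, by Corollary \ref{agentseq!cor_deriv_pphi} together with the necessary condition encoded in Theorem \ref{nnsystem!thm_nec_for_stab} (here the $\rho$'s are all $-1/2$, so the products agree), we get $a_{01} = a_{11} = 0$ as well. This is what makes $\gamma'(0)$ genuinely of order one rather than forcing $\gamma$ to degenerate. Next I would expand $P_{\phi}(\gamma(\phi))$ as a power series in $\phi$. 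Writing $\gamma(\phi) = c_1 \phi + \tfrac12 c_2 \phi^2 + \cdots$ with $c_1 = \gamma'(0)$, $c_2 = \gamma''(0)$, the lowest surviving contributions come from $a_{20}\nu^2$ evaluated at $\nu = \gamma(\phi)$, which gives $a_{20} c_1^2 \phi^2 + \cdots$, and from $a_{02}\phi^2$, which is the $k=2$, $\nu^0$ term; the $a_{0k}\phi^k$ terms with $k \le 1$ and the $a_{1k}\phi^k \nu$ terms with $k\le 1$ have already been shown to vanish, and $a_{10}\nu$ contributes nothing since $a_{10}=0$. Collecting the coefficient of $\phi^2$ yields
\begin{equation}
a_{20}\,c_1^2 + a_{02} = 0,
\end{equation}
which is precisely \ERef{agentseq!taylor_solve_c1}.

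Then I would collect the coefficient of $\phi^3$. The contributions are: from $a_{20}\nu^2$, the cross term $2 a_{20} c_1 \cdot \tfrac12 c_2 = a_{20} c_1 c_2$ (from $2\cdot c_1\phi \cdot \tfrac12 c_2\phi^2$); from $a_{21}\phi\,\nu^2$, the term $a_{21} c_1^2$; from $a_{12}\phi^2\,\nu$, the term $a_{12} c_1$; from $a_{30}\nu^3$, the term $a_{30} c_1^3$; and from $a_{03}\phi^3$, the term $a_{03}$. (One should also check the $\phi^3$-contribution of $a_{11}\phi\nu$ and $a_{10}\nu$, but these vanish because $a_{10}=a_{11}=0$.) Setting the total to zero,
\begin{equation}
a_{20} c_1 c_2 + a_{30} c_1^3 + a_{21} c_1^2 + a_{12} c_1 + a_{03} = 0,
\end{equation}
and solving for $c_2 = \gamma''(0)$ gives exactly \ERef{agentseq!taylor_solve_c2}, provided $a_{20}\gamma'(0) \neq 0$.

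The main obstacle is bookkeeping rather than conceptual: one must be careful to justify that the higher-order terms of $P_{\phi}$ (the $a_{2k}$ with $k\ge 1$, the $a_{3k}$, and all terms $a_{jk}\nu^j$ with $j\ge 4$) do not contaminate the $\phi^2$ and $\phi^3$ coefficients. This follows because $\nu = \gamma(\phi) = O(\phi)$, so a term $a_{jk}\phi^k\nu^j$ is $O(\phi^{j+k})$; thus only $(j,k)$ with $j+k \le 3$ can contribute, and among those the ones with $j\le 1$ or $k\le 1$ paired with small $j$ are killed by the vanishing results cited above. The remaining subtlety is the denominator: \ERef{agentseq!taylor_solve_c2} is only meaningful when $a_{20}\neq 0$ and $\gamma'(0)\neq 0$, i.e. when $a_{02}/a_{20}$ is a nonzero quantity with a genuine square root; I would remark that $a_{20}\neq 0$ holds by Remark \ref{rlproofs!rem_phizero_poly} (the second-order term of $P_0$ contains exactly one diagonal $\nu^2$, so its $\nu^2$-coefficient is nonzero), and that the non-degenerate case $\gamma'(0)\ne 0$ is the one of interest, with the degenerate case handled separately if needed.
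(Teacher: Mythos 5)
Your proposal is correct and follows the same route as the paper's proof (substitute the Taylor series of $\gamma$ into $P_{\phi}(\gamma(\phi))=0$ and collect the $\phi^2$ and $\phi^3$ coefficients); the paper simply states this as a ``straightforward calculation,'' whereas you carry out the bookkeeping explicitly. Your added observations --- that $a_{01}=a_{11}=0$ is needed so the $\phi^2$ equation reduces to $a_{20}c_1^2+a_{02}=0$, and that only terms with $j+k\le 3$ can contribute since $\nu=O(\phi)$ --- are details the paper leaves implicit, and they are handled correctly.
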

\begin{proof}
The equation $P_{\phi}( \gamma( \phi )) = 0$ expands to a power series in $\phi$.
Set $\nu = \gamma( \phi )$ and expand using equation \ERef{charpolyexpand!eq_gamma_taylor}.
Plug this value of $\nu$ into the polynomial of equation \ERef{charpolyexpand!eq_Pphi_expand}.
Condition \ref{charpolyexpand!eq_Pphigamma} says this expansion in $\phi$ vanishes identically.
When you solve for the derivatives $\gamma^{(m)}(0)$ in terms of $a_{jk}$
you get equations \ERef{agentseq!taylor_solve_c1} and \ERef{agentseq!taylor_solve_c2}.
This is a straightforward calculation
and you can check the results using algebraic manipulation software, like SAGE.
\end{proof}

Notice that there are two solutions to the first order coefficient $c_1$.
There is a double root that splits into two distinct curves,
so there are two distinct values for $\gamma^{(1)}(0)$ and $\gamma^{(2)}(0)$.

At this point we find the coefficients $a_{jk}$ that are required for our expansion.
The reader may find it more digestible to jump to Section \ref{sec!charpolyzero}
and refer to the remaining portion of this section as needed.
We start with the following Proposition.

\begin{Proposition}
\label{agentseq!cor_deriv_akm}
If we assume equation \ERef{symlap!assume_rhox_equal} and \ERef{symlap!assume_rhov_equal} then we have,
\begin{align}
\label{agentseq!cor_eq_a_mk}
 a_{mk}  &= 0, \text{ for }k\text{ odd.}
    \\
\label{agentseq!cor_eq_a_02}
 a_{02} &= (-1)^{p} (\rho_{x,1})^{p} g_{x}^{(0)} \cdots g_{x}^{(p-1)}
          = 2^{-p} \prod_{j=0}^{p-1} { g_{x}^{(j)} }
    \\
\label{agentseq!cor_eq_a_12}
 a_{12} &= 2^{-p} \prod_{j=0}^{p-1} { g_{x}^{(j)} }
                 \sum\limits_{k=0}^{p-1} { \frac{ g_{v}^{(k)} }{ g_{x}^{(k)} } }
\end{align}
\end{Proposition}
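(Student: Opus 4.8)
The plan is to read everything off Corollary \ref{agentseq!cor_deriv_pphi}, which already expresses $\left.\frac{d^{k}P_{\phi}}{d\phi^{k}}\right|_{\phi=0}$ for $k\ge 1$ as a combination of the two products $\prod_{j}\psi_{1}^{(j)}(\nu)$ and $\prod_{j}\psi_{-1}^{(j)}(\nu)$, and then to exploit the symmetry hypotheses. Under \eqref{symlap!assume_rhox_equal} and \eqref{symlap!assume_rhov_equal} we have not only the identity \eqref{symlap!assume_psi_equal}, but in fact
\[
\psi_{1}^{(\alpha)}(\nu)=\psi_{-1}^{(\alpha)}(\nu)=-\tfrac12\bigl(g_{v}^{(\alpha)}\nu+g_{x}^{(\alpha)}\bigr)=-\tfrac12\,\psi_{0}^{(\alpha)}(\nu).
\]
Writing $\Pi(\nu):=\prod_{j=0}^{p-1}\psi_{1}^{(j)}(\nu)=\bigl(-\tfrac12\bigr)^{p}\prod_{j=0}^{p-1}\bigl(g_{v}^{(j)}\nu+g_{x}^{(j)}\bigr)$, Corollary \ref{agentseq!cor_deriv_pphi} collapses to
\[
\left.\frac{d^{k}P_{\phi}}{d\phi^{k}}\right|_{\phi=0}=(-1)^{p+1}\bigl(i^{k}+(-i)^{k}\bigr)\,\Pi(\nu),\qquad k\ge 1.
\]

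Next I would simply evaluate the scalar $i^{k}+(-i)^{k}$. For $k$ odd it is $0$, so the whole $k$-th $\phi$-derivative of $P_{\phi}$ vanishes identically in $\nu$; since the $a_{mk}$ are by definition the $\nu$-coefficients of $\tfrac1{k!}\left.\frac{d^{k}P_{\phi}}{d\phi^{k}}\right|_{\phi=0}$, this gives $a_{mk}=0$ for every $m$ and every odd $k$, which is \eqref{agentseq!cor_eq_a_mk}. For $k=2$ we have $i^{2}+(-i)^{2}=-2$, hence
\[
\frac{1}{2!}\left.\frac{d^{2}P_{\phi}}{d\phi^{2}}\right|_{\phi=0}=(-1)^{p}\,\Pi(\nu)=(-1)^{p}\bigl(-\tfrac12\bigr)^{p}\prod_{j=0}^{p-1}\bigl(g_{v}^{(j)}\nu+g_{x}^{(j)}\bigr)=2^{-p}\prod_{j=0}^{p-1}\bigl(g_{v}^{(j)}\nu+g_{x}^{(j)}\bigr).
\]
Then $a_{02}$ and $a_{12}$ are the constant and linear coefficients of this polynomial in $\nu$. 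The constant term is $2^{-p}\prod_{j}g_{x}^{(j)}$, which is \eqref{agentseq!cor_eq_a_02} (and the first equality stated there is just the identity $(-1)^{p}(\rho_{x,1})^{p}=(-1)^{p}(-1/2)^{p}=2^{-p}$). The coefficient of $\nu$ is $2^{-p}\sum_{k=0}^{p-1}g_{v}^{(k)}\prod_{j\ne k}g_{x}^{(j)}=2^{-p}\bigl(\prod_{j}g_{x}^{(j)}\bigr)\sum_{k}g_{v}^{(k)}/g_{x}^{(k)}$, which is \eqref{agentseq!cor_eq_a_12}.

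There is essentially no deep obstacle here beyond careful bookkeeping, but two points deserve attention. First, Corollary \ref{agentseq!cor_deriv_pphi} is only valid for $k\ge 1$: the constant and linear terms of $P_{\phi}$ itself are governed separately by Proposition \ref{rlproofs!poly_phi0_quad} and Remark \ref{rlproofs!rem_phizero_poly}, so the argument correctly says nothing about $a_{m0}$ (none is claimed). Second, one must track the factorial $1/k!$ together with the sign and power bookkeeping $(-1)^{p}(-\tfrac12)^{p}=2^{-p}$; for $k=2$ the $1/2!$ cancels against the $-2$ coming from $i^{2}+(-i)^{2}$ up to sign, which is precisely why $a_{02}$ and $a_{12}$ emerge with the clean factor $2^{-p}$ and no stray constant.
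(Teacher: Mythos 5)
Your proposal is correct and follows essentially the same route as the paper's own proof: both read the result off Corollary \ref{agentseq!cor_deriv_pphi} using $\psi_{1}^{(\alpha)}=\psi_{-1}^{(\alpha)}=-\tfrac12(g_{v}^{(\alpha)}\nu+g_{x}^{(\alpha)})$, kill the odd-$k$ terms via $i^{k}+(-i)^{k}=0$, and extract $a_{02},a_{12}$ as the constant and linear coefficients of $(-1)^{p}\prod_{j}\psi_{1}^{(j)}(\nu)$. Your bookkeeping of the $1/k!$, the sign $(-1)^{p}(-\tfrac12)^{p}=2^{-p}$, and the caveat that the corollary only governs $k\ge 1$ are all accurate.
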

\begin{proof}
Since  $\psi_{1}(\nu) = \psi_{-1}(\nu)$ then, by Corollary \ref{agentseq!cor_deriv_pphi}, we have
\[
\frac{1}{ k! } \left. \frac{ d^{k} P_{\phi} }{ d \phi^{k}  } \right|_{\phi = 0}
  = \frac{1}{ k! } (-1)^{p+1} ( i^{k} + (-i)^{k} )  \left( \psi_{1}^{(0)}( \nu) \psi_{1}^{(1)}( \nu)
           \cdots \psi_{1}^{(p-1)}( \nu)  \right).
\]
The coefficients are just the coefficients of this polynomial in $\nu$.
The coefficient $a_{mk}$ is the $m$'th polynomial coefficient of the polynomial of the $k$'th derivative.
The coefficients $a_{mk} = 0$ whenever $k$ is odd.
When $k = 2$ we have the polynomial,
\begin{align*}
\frac{1}{ 2 } \left. \frac{ d^{2} P_{\phi} }{ d \phi^{2}  } \right|_{\phi = 0}
  &= (-1)^{p} \left( \psi_{1}^{(0)}( \nu) \psi_{1}^{(1)}( \nu) \cdots \psi_{1}^{(p-1)}( \nu)  \right)
  \\
  &= a_{02} + a_{12} \nu + \cdots.
\end{align*}
To form the linear term in $\nu$ we can take the linear term in each $\psi_{1}^{(\alpha)}( \nu )$
where all the other terms contribute a constant term.   The result is that the linear term is
\[
a_{12}
 = (-1)^{p}  \sum\limits_{j=0}^{p-1} { g_{v}^{j} \rho_{v,1}^{j} \frac{ g_{x}^{0} \cdots g_{x}^{(p-1)} }{ g_{x}^{j} } \left( \rho_{x,1} \right)^{p-1} }
 = \frac{1}{2^{p}} \prod_{j=0}^{p-1} { g_{x}^{(j)} }
                 \sum\limits_{j=0}^{p-1} { \frac{ g_{v}^{(j)} }{ g_{x}^{(k)} } }.
\]
The term $a_{02}$ comes from the constant term which is computed in a similar way.
\end{proof}

\begin{Proposition}
\label{agentseq!prop_term_a20}
\label{agentseq!prop_term_a30}
Assume equation \ERef{symlap!assume_rhox_equal} and \ERef{symlap!assume_rhov_equal}.
The degree $2$ term of the characteristic polynomial, when $\phi = 0$, is given by
\begin{equation}
\label{agentseq!eqnu2_second}
a_{20}
  = \frac{p}{2^{p-1}} \prod_{j=0}^{p-1} { g_{x}^{(j)} } \left(
         \sum\limits_{k=0}^{p-1} { \frac{ 1 }{  g_{x}^{(k)} } }
    \right)
  = \frac{p^2}{2^{p-1}} \prod_{j=0}^{p-1} { g_{x}^{(j)} }  \Avg \left(  \frac{ 1 }{  g_{x}^{(k)} } \right)
\end{equation}

The degree $3$ term of the characteristic polynomial, when $\phi = 0$, has the value
\begin{align}
\label{agentseq!eqnu3_third}
a_{30}
  &= \frac{(2p)}{2^p}
 \left( g_{x}^{(0)} \cdots  g_{x}^{(p-1)} \right)
 \sum\limits_{k=0}^{p-1} \sum\limits_{\substack{j=0 \\ j \neq k}}^{p-1} {
    \frac{g_v^{(j)}}{g_x^{(k)} g_x^{(j)}}
  }
  \nonumber
  \\
  &= \frac{(2p)}{2^p}
  \prod_{j=0}^{p-1} { g_{x}^{(j)} }
  \left(
    \sum\limits_{k=0}^{p-1} {
      \frac{1}{g_x^{(k)}}
    }
    \sum\limits_{j=0}^{p-1} {
      \frac{g_v^{(j)}}{ g_x^{(j)}}
    }
   -
    \sum\limits_{k=0}^{p-1} {
      \frac{g_v^{(k)}}{ g_x^{(k)} g_x^{(k)} }
    }
  \right)
\end{align}
where the last sum is only over $(i,j)$ where $0 = i < j \le (p-1)$.
\end{Proposition}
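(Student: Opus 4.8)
The plan is to extract the coefficients $a_{20}$ and $a_{30}$ directly from the determinant $P_0(\nu)$ in Corollary \ref{nnsystem!cor_eigsofpoly}, using the structural facts already assembled in Remark \ref{rlproofs!rem_phizero_poly}: at $\phi = 0$ the degree-$2$ term of $P_0(\nu)$ contains exactly one of the diagonal $\nu^2$ factors, and the degree-$3$ term also contains exactly one such factor. Combined with the assumptions \ERef{symlap!assume_rhox_equal}, \ERef{symlap!assume_rhov_equal} (so that all $\rho_{\eta,\pm 1}^{(\alpha)} = -1/2$ and $\psi_1^{(\alpha)} = \psi_{-1}^{(\alpha)}$), this reduces the computation to a bookkeeping exercise over the permutations $\sigma$ with $\sigma(k) \in \{k-1,k,k+1\} \pmod p$ that contribute to the determinant — but with the extra constraint that exactly one index is ``used up'' by a $\nu^2$ diagonal term.

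First I would set up notation: write the matrix entries at $\phi = 0$ as $M_{kk} = \nu^2 + \psi_0^{(k)}(\nu)$ on the diagonal and $M_{k,k\pm1} = \psi_{\pm 1}^{(k)}(\nu)$ off-diagonal (indices mod $p$), and recall $\psi_0^{(k)}(\nu) = g_v^{(k)}\nu + g_x^{(k)}$ while $\psi_{\pm 1}^{(k)}(\nu) = -\tfrac12(g_v^{(k)}\nu + g_x^{(k)})$. To get $a_{20}$: pick a single index $m$ to contribute its $\nu^2$ from $M_{mm}$; the remaining $p-1$ indices must then form permutation cycles among $\{k-1,k,k+1\}$ that contribute a pure \emph{constant} in $\nu$ overall (so that the total degree is exactly $2$). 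Once $m$ is removed, the only way the remaining indices can all be matched at zeroth order in $\nu$ is by each taking its own diagonal entry $\psi_0^{(k)}(0) = g_x^{(k)}$ for $k \neq m$ — the off-diagonal $\psi_{\pm1}$ contributions, being proportional to products of at least two $\psi$'s, would introduce the wrong structure or extra powers; here I would check carefully that the transposition-type terms (where $k$ and $k+1$ swap, contributing $\psi_1^{(k)}\psi_{-1}^{(k+1)}$) are killed because they are degree $\ge 0$ but come with a sign and, when multiplied by the $\nu^2$, would land in degree $\ge 2$ with nonzero $\nu$-dependence unless evaluated at $\nu=0$ — and at $\nu = 0$ one gets $\tfrac14 g_x^{(k)} g_x^{(k+1)}$ which must be accounted for. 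This is the delicate point: I expect $a_{20} = \sum_{m} \prod_{k \neq m} \psi_0^{(k)}(0) \cdot (\text{correction from adjacent transpositions})$ to collapse, after using $\prod_{k\neq m} g_x^{(k)} = \bigl(\prod_j g_x^{(j)}\bigr)/g_x^{(m)}$, to the stated $\frac{p}{2^{p-1}} \prod_j g_x^{(j)} \sum_k 1/g_x^{(k)}$, and the second equality in \ERef{agentseq!eqnu2_second} is then just the definition of $\Avg$.

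For $a_{30}$, the structure is the same but one degree higher: again exactly one index $m$ contributes $\nu^2$ from $M_{mm}$, and the remaining $p-1$ indices must now contribute a \emph{linear} term in $\nu$. That linear term arises in exactly two ways — either one index $j \neq m$ contributes the $\nu$-part $g_v^{(j)}\nu$ of its diagonal $\psi_0^{(j)}$ while all others contribute their constants $g_x^{(k)}$, or a $\nu$-coefficient is picked up from one of the off-diagonal $\psi_{\pm1}^{(j)}$ factors inside a transposition — and I would organize the sum over the pair $(m,j)$ accordingly, carefully tracking whether $j$ is or is not adjacent to the excluded index $m$ (this is the source of the ``$j \neq k$'' restriction in the first line of \ERef{agentseq!eqnu3_third} and the subtracted diagonal sum $\sum_k g_v^{(k)}/(g_x^{(k)})^2$ in the second line). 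After factoring out $2^{-p}\prod_j g_x^{(j)}$ and collecting, the double sum $\sum_k \sum_{j\neq k} g_v^{(j)}/(g_x^{(k)} g_x^{(j)})$ rewrites as $\bigl(\sum_k 1/g_x^{(k)}\bigr)\bigl(\sum_j g_v^{(j)}/g_x^{(j)}\bigr) - \sum_k g_v^{(k)}/(g_x^{(k)})^2$, giving the second form. Throughout, the overall factors of $(-1)^p$ and powers of $2$ come from $\rho_{x,1} = -1/2$ raised to the relevant power and from the sign $(-1)^{p+1}$ in Corollary \ref{agentseq!cor_deriv_pphi} / the structure of the determinant expansion; I would double-check the constant $2p$ versus $p$ prefactor by noting that, unlike the degree-$2$ case, the degree-$3$ term also receives a contribution from the \emph{non-excluded} indices forming a length-$2$ cycle that drags in a second $\nu^2$-adjacent correction, effectively doubling the count. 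The main obstacle is precisely this combinatorial bookkeeping of which permutations survive once one diagonal slot is reserved for $\nu^2$ and which sign each carries; everything else is routine polynomial algebra, and as the authors note it can be cross-checked symbolically.
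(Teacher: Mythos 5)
There is a genuine gap at exactly the point you flag as ``the delicate point.'' Your plan is to reserve one diagonal slot for the $\nu^2$ and then claim that the remaining $p-1$ indices contribute, at zeroth order in $\nu$, essentially $\prod_{k\neq m}\psi_0^{(k)}(0)=\prod_{k\neq m}g_x^{(k)}$ plus ``corrections from adjacent transpositions'' that you \emph{expect} to collapse to the stated answer. But the naive diagonal product alone would give $a_{20}=\sum_m\prod_{k\neq m}g_x^{(k)}$ with prefactor $1$, whereas the correct prefactor is $\tfrac{p}{2^{p-1}}$; so the transposition and longer-cycle terms are not a small correction — they \emph{are} the computation, and your sketch never evaluates them. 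The paper closes this gap with a specific identity: after cyclically rotating index $k$ to position $0$ and deleting that row and column, the cofactor is a genuine tridiagonal determinant whose diagonal satisfies $\psi_0^{(j)}=-(\psi_1^{(j)}+\psi_{-1}^{(j)})$ (row-sum zero, equation \ERef{rlproofs!sum_psis_vanish}), and Proposition \ref{prop_trimat_det} then shows this determinant collapses exactly to a sum of $p$ products, each consisting of $p-1$ pure off-diagonal factors $\psi_{\pm1}$ and no $\psi_0$'s at all. Under the symmetry assumption $\psi_1=\psi_{-1}$ all $p$ products are equal, which is where the factor $p$ comes from, and $\rho_{x,1}=-\tfrac12$ raised to the power $p-1$ supplies the $2^{-(p-1)}$. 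Without this identity (or an equivalent closed-form evaluation of the cofactor), your permutation bookkeeping has no mechanism to produce the right constant.

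A secondary problem: your proposed explanation of the $a_{30}$ prefactor — that a second length-$2$ cycle ``effectively doubles the count'' relative to $a_{20}$ — is chasing a phantom, since $\tfrac{2p}{2^p}=\tfrac{p}{2^{p-1}}$ is literally the same prefactor as in $a_{20}$. In the paper's argument the cubic term is obtained from the same collapsed cofactor sum by letting exactly one factor $\psi_{\pm1}^{(j)}$ contribute its linear part $g_v^{(j)}\rho_{v,\pm1}\nu$; summing the multiplicities $\rho_{v,1}(p-1-j)+\rho_{v,-1}(j+1)=-\tfrac{p}{2}$ over the $p$ products reproduces the identical prefactor, with no doubling. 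The algebraic rewriting of the double sum into the product-of-sums-minus-diagonal form in the second line of \ERef{agentseq!eqnu3_third} is correct as you state it, but the combinatorial core of both coefficient computations is missing from your proposal.
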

\begin{proof}
To compute the second order term in $s( \nu )$ we see, from
Proposition \ref{rlproofs!poly_phi0_quad} that all $2$nd order terms
must contain a $\nu^2$ from the diagonal and all the remaining terms are constants.
Similarly, to compute the $3$rd order terms in $s( \nu )$,
by Proposition \ref{rlproofs!poly_phi0_quad}, all the cubic terms
that do not include a $\nu^2$ diagonal factor must sum to zero.
So the $3$rd order terms include a $\nu^2$ term from the diagonal
and a $\nu$ factor from one of the $\psi$'s.

The co-factor of the diagonal term have
the form assumed in Proposition \ref{prop_trimat_det}.
The $\nu^2$ in the $(k,k)$ position has a co-factor of the form,
\begin{align*}
 & (\sgn(\eta))^2 \det \left| \begin{matrix}
    \psi_0^{(k+1)}(\nu)  &   \psi_{1}^{(k+1)}(\nu)  &        0
           &   0   &   \cdots    &    0    &    0
    \\
    \psi_{-1}^{(k+2)}(\nu) & \psi_0^{(k+2)}(\nu)  &   \psi_{1}^{(k+2)}(\nu)
           &   0   &   \cdots    &    0    &    0
    \\
    \vdots      &    \vdots    &   \vdots
           &  \vdots  &  \cdots   &   \vdots    & \vdots
    \\
       0        &        0     &      0
           &   0     &    \cdots    &   \psi_0^{(k-1)}(\nu)  &   \psi_{1}^{(k-1)}(\nu)
  \end{matrix} \right]
  \\
 = & (-1)^{p-1} \left(
      \psi_{1}^{(k+1)}(\nu) \cdots \psi_{1}^{(k+p-1)}(\nu)
       + \cdots +
      \psi_{-1}^{(k+1)}(\nu) \cdots \psi_{-1}^{(k+p-1)}(\nu)
   \right)
\end{align*}
where all the coefficient arithmetic is assumed $\pmod{p}$
and $\eta \in S_{p}$ is the permutation that rotates the indices
so that $k$ is rotated to position $0$.
Each term in the last sum has $p-1$ factors.
The constant term is easily isolated and
all $p$ terms in the sum are equal to
\[
(-1)^{p-1} ( \rho_{x,1} )^{p-1} p \frac{ g_{x}^{(0)} g_{x}^{(1)} \cdots g_{x}^{(p-1)} }{ g_{x}^{(k)} }
 = \frac{p}{2^{p-1}} \frac{ g_{x}^{(0)} g_{x}^{(1)} \cdots g_{x}^{(p-1)} }{ g_{x}^{(k)} }.
\]
Equation \ERef{agentseq!eqnu2_second} follows.

For the cubic term, exactly one of the $\psi$ terms will contribute a $\nu$,
and so a $\rho_{v,\pm 1}$, which we label the $j$th term.
The contribution from the $k$'th $\nu^2$ diagonal
will have $p-2$ factors of $\rho_{x,\pm 1}$ and has the form,
\begin{align*}
(-1)^{p-1} (\rho_{x,1})^{p-2}
  & \sum\limits_{j=k+1}^{k-1+p} {
      \frac{ g_{x}^{(0)} \cdots g_{x}^{(p-1)} }{ g_{x}^{(k)} g_{x}^{(j)} } g_{v}^{(j)}
      \left(  \rho_{v,1} (p-1-j)  +   \rho_{v,-1} (j+1) \right)
  }
  \\
 &= \frac{ p }{ 2^{p-1}}
  \sum\limits_{\substack{j=0 \\ j \neq k}}^{p-1} {
    \frac{g_x^{(0)} \cdots g_{x}^{(p-1)}}{ g_{x}^{k} g_{x}^{j} }
     g_{v}^{j}
  }
\end{align*}
recall that all index arithmetic is $\mod(p)$.
The formula for $a_{30}$ follows.
\end{proof}

\begin{Proposition}
\label{prop_trimat_det}
Let $D_{n}$ be the determinant,
\begin{equation}
D_{n} = \left| \begin{matrix}
     ( c_{1} + d_{1} ) & -c_{1}  &   0    &     0       &   \cdots
        &      0      &     0
    \\
   -d_{2}  &   (d_{2} + c_{1} ) & -c_{2}  &   0    &   \cdots
        &      0      &     0
    \\
      0    &   -d_{3}  &   (d_{3} + c_{3} ) & -c_{3}  & \cdots
        &      0      &     0
    \\
     \vdots &    \vdots     &            \vdots                     &   \vdots
        &    \ddots   &   \vdots      &         \vdots
    \\
      0    &    0     &        0      &           0
        &   \cdots    &    (d_{n-1} + c_{n-1} ) &  -c_{n-1}
    \\
      0    &    0     &        0      &           0
        &   \cdots    &  -d_{n}   &      (d_{n} + c_{n} )
  \end{matrix} \right|
\end{equation}
then the determinant is given by,
\[
D_n
 =    ( d_{1} \cdots d_{n} )
    + ( d_{1} \cdots d_{n-1} c_{n} )
    + \cdots
    + ( d_{1} c_{2} \cdots c_{n-1} c_{n} )
    + ( c_{1} \cdots c_{n} ).
\]
\end{Proposition}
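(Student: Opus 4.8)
The plan is to prove the formula for $D_n$ by induction on $n$, expanding the determinant along the last row (or the last column). Let me set up notation: write $E_n$ for the claimed expression
\[
E_n = (d_1 \cdots d_n) + (d_1 \cdots d_{n-1} c_n) + \cdots + (d_1 c_2 \cdots c_n) + (c_1 \cdots c_n),
\]
that is, the sum over $j = 0, 1, \dots, n$ of $(d_1 \cdots d_j)(c_{j+1} \cdots c_n)$, with the convention that the empty product is $1$. The base case $n = 1$ is immediate since $D_1 = d_1 + c_1 = E_1$ (and it is worth also recording $n=2$ explicitly as a sanity check, since the matrix pattern in the statement has a small typo in the third diagonal entry — $(d_3 + c_3)$ should read $(d_3 + c_3)$ only if one reads $c$ consistently; I would state the intended tridiagonal pattern cleanly at the top of the proof: diagonal entries $d_i + c_i$ with $d_1$ absent-or-present as written, superdiagonal $-c_i$, subdiagonal $-d_{i+1}$).

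For the inductive step, expand $D_n$ along the last row, whose only nonzero entries are $-d_n$ in column $n-1$ and $(d_n + c_n)$ in column $n$. This gives
\[
D_n = (d_n + c_n) D_{n-1} - (-d_n)\,(-1)^{n + (n-1)} M,
\]
where $M$ is the minor obtained by deleting row $n$ and column $n-1$. The key observation is that this minor $M$ is upper triangular: deleting column $n-1$ removes the only subdiagonal entry that could have landed below the diagonal in the remaining rows, so $M$ has diagonal entries $(d_1 + c_1), (d_2 + c_2), \dots, (d_{n-2} + c_{n-2})$ from the first $n-2$ rows and the entry $-c_{n-1}$ from row $n-1$ (which sits on the diagonal of the minor after the column deletion). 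Hence $M = (-c_{n-1}) \prod_{i=1}^{n-2}(d_i + c_i)$ — but actually I need to be more careful here, since $\prod (d_i + c_i)$ is not what appears in $E_n$; so instead I would argue that the minor obtained by deleting row $n$ and column $n-1$ is itself lower-block-triangular with an explicit product structure, giving $M = (-c_{n-1}) D_{n-2}'$ where $D_{n-2}'$... no: cleaner is to note $M$ is actually upper triangular with the stated entries, so after simplifying signs the recursion becomes
\[
D_n = (d_n + c_n)\,D_{n-1} - c_n\, c_{n-1}\!\!\prod_{i=1}^{n-2}(d_i+c_i).
\]
This is getting messy, which signals that expanding along the last \emph{column} is the better move: the last column has entries $-c_{n-1}$ in row $n-1$ and $(d_n+c_n)$ in row $n$, and the minor from deleting row $n-1$, column $n$ is upper triangular with obvious determinant, while the minor from deleting row $n$, column $n$ is exactly $D_{n-1}$. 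That route yields the clean two-term recursion $D_n = (d_n + c_n) D_{n-1} - d_n c_{n-1} D_{n-2}$... and one then checks $E_n = (d_n + c_n)E_{n-1} - d_n c_{n-1} E_{n-2}$ directly from the closed form.

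The main obstacle — and the only place requiring genuine care — is bookkeeping the signs and the off-diagonal products in the cofactor expansion, and confirming that the resulting recursion for $D_n$ is satisfied by the closed-form $E_n$. Concretely, I would verify the recurrence $E_n = (d_n + c_n) E_{n-1} - d_n c_{n-1} E_{n-2}$ by splitting the sum defining $E_n$ according to whether the index $j$ equals $n$, equals $n-1$, or is at most $n-2$, and matching terms: the $j=n$ term $d_1\cdots d_n$ together with the $j \le n-1$ terms reconstruct $d_n E_{n-1}$ plus $c_n E_{n-1}$ minus a correction that is precisely $d_n c_{n-1} E_{n-2}$. Once the recurrence and the two base cases $E_1 = D_1$, $E_2 = D_2$ are in hand, the induction closes immediately. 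I would end by remarking that this determinant is exactly the structure arising from the cofactor of a diagonal $\nu^2$ entry in Proposition~\ref{agentseq!prop_term_a20}, with $c_i, d_i$ playing the roles of the $\psi_{1}^{(\cdot)}$ and $\psi_{-1}^{(\cdot)}$ factors, which is why the formula is stated in this form.
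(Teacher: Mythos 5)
Your proposal is correct and follows essentially the same route as the paper: induction via the standard three-term recurrence for tridiagonal determinants, $D_n = (d_n+c_n)D_{n-1} - d_n c_{n-1}D_{n-2}$ (the paper expands from the top row rather than the bottom, which is immaterial), combined with checking that the closed form satisfies the same recurrence. Your write-up meanders (the minors you call ``upper triangular'' are really block-triangular with a single nonzero entry in the relevant row or column), but the recursion you land on and the verification of $E_n$ against it are correct, and you in fact spell out the step the paper leaves as ``easily derived.''
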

\begin{proof}
We proceed by induction and use the general form for determinants of tri-diagonal matrices.
The formula is easily derived from the definition of the determinant and has the form,
\begin{equation}
\label{prop_trimat_det!eq_Dn_iteratte}
D_{n}(1,n) =  (d_{1}  + d_{1}) D_{n-1}(2,n) - c_{1} d_{2} D_{n-2}(3,n),
\end{equation}
where $D_{n-1}(k_1,k_2)$ is the determinant of the $k_2 - k_1 + 1$ square matrix
with row and column indices between $k_1$ and $k_2$ inclusive.
The cases $n =3$ and $n = 4$ are easily computed directly.
The case for general $n$ can be proved
by induction using equation \ERef{prop_trimat_det!eq_Dn_iteratte}.
\end{proof}

\section{Characteristic Polynomial Near 0}
\label{sec!charpolyzero}

The characteristic equation of the system is given in Corollary \ref{nnsystem!cor_eigsofpoly}.
The Taylor expansion of this equation in the variable $\phi$,
described in Section \ref{charpolyexpand!start},
results in an approximation for the roots that lie near the double root at $\nu = 0$.
The main result is given in Theorem \ref{cpolyzero!thm_secorder}.
See Section \ref{charpolyexpand!start} for a detailed derivation
of some of the coefficients needed in the expansion.

\begin{Theorem}
\label{cpolyzero!thm_secorder}
With the assumptions described at the start of Section \ref{sec!symlap}
the characterisitic polynomial $P_{\phi}( \nu )$ has a double zero when $\phi = 0$.
The coefficients $c_1, c_2$, in the expansion of equation \ERef{charpolyexpand!eq_gamma_taylor}
are given by,
\begin{align}
\label{charpolyzero!eq_c1}
  \gamma'(0) &= \pm \frac{i}{ p } \sqrt{ \frac{ 1 }{ 2 \Avg \left( 1/g_{x}^{(k)} \right) } }
  \\
\label{charpolyzero!eq_c2}
  \gamma''(0) &= - \frac{ \Avg \left( \frac{g_v^{(k)}}{ g_x^{(k)} g_x^{(k)} } \right) }
                { 2 \left( p \Avg \left( \frac{1}{ g_x^{(k)} } \right) \right)^{2} }
\end{align}
This means that the zero locus, near the double root, is approximated
by (see equation \ERef{charpolyexpand!eq_gamma_taylor}),
\begin{equation}
\label{cpolyzero!eq_zerolocus_secorder}
\gamma( \phi )
  = \pm i \sqrt{ \frac{ 1 }{ 2 \Avg \left( 1/g_{x}^{(k)} \right) } } \left( \frac{\phi}{p} \right)
  - \frac{  \Avg \left( \frac{g_v^{(k)}}{ g_x^{(k)} g_x^{(k)} } \right) }
         { \left( 2 \Avg \left( 1/g_{x}^{(k)} \right) \right)^{2} }
        \left( \frac{\phi}{p} \right)^2
  + \cdots,
\end{equation}
\end{Theorem}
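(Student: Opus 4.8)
The plan is to derive everything by substitution into the two formulas of Proposition~\ref{agentseq!prop_gamma_expand}, using the explicit Taylor coefficients already assembled in Section~\ref{charpolyexpand!start}. The statement that $P_0(\nu)$ has a double zero at $\nu=0$ is precisely the content of Proposition~\ref{rlproofs!poly_phi0_quad}: the constant and linear terms of $P_0$ vanish. Since $a_{20}\neq 0$ by equation~\eqref{agentseq!eqnu2_second}, the local expansion $P_\phi(\nu)=a_{02}\phi^2+a_{20}\nu^2+(\text{higher order})$ shows the double root splits, for small $\phi$, into two branches $\gamma$ with $\gamma(0)=0$ that are analytic in $\phi$; this is exactly the expansion~\eqref{charpolyexpand!eq_gamma_taylor} posited in Section~\ref{charpolyexpand!start}, and Proposition~\ref{agentseq!prop_gamma_expand} then expresses $\gamma'(0)$ and $\gamma''(0)$ in terms of the $a_{jk}$.

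For $\gamma'(0)$ I would substitute $a_{02}=2^{-p}\prod_j g_x^{(j)}$ (equation~\eqref{agentseq!cor_eq_a_02}) and $a_{20}=\tfrac{p^2}{2^{p-1}}\prod_j g_x^{(j)}\,\Avg(1/g_x^{(k)})$ (equation~\eqref{agentseq!eqnu2_second}) into~\eqref{agentseq!taylor_solve_c1}. The factors $\prod_j g_x^{(j)}$ cancel and the powers of $2$ collapse to a single $1/2$, so $\gamma'(0)=\pm\sqrt{-1/\bigl(2p^2\Avg(1/g_x^{(k)})\bigr)}$; as $a_{02},a_{20}>0$ the radicand is negative, which produces the factor $i$ and hence equation~\eqref{charpolyzero!eq_c1}.

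For $\gamma''(0)$ the key reduction is that $a_{21}=0$ and $a_{03}=0$, both carrying an odd second index, by equation~\eqref{agentseq!cor_eq_a_mk}; this rests in turn on $\psi_1^{(\alpha)}=\psi_{-1}^{(\alpha)}$ (equation~\eqref{symlap!assume_psi_equal}) feeding into Corollary~\ref{agentseq!cor_deriv_pphi}, where $i^k+(-i)^k=0$ for odd $k$. Thus the numerator of~\eqref{agentseq!taylor_solve_c2} is just $a_{30}(\gamma'(0))^3+a_{12}\gamma'(0)$, and cancelling the common factor $\gamma'(0)$ gives the (sign-independent) expression $\gamma''(0)=-\bigl(a_{30}(\gamma'(0))^2+a_{12}\bigr)/a_{20}$. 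Now substitute $(\gamma'(0))^2=-1/\bigl(2p^2\Avg(1/g_x^{(k)})\bigr)$ together with $a_{30}$ from~\eqref{agentseq!eqnu3_third}, $a_{12}$ from~\eqref{agentseq!cor_eq_a_12}, and $a_{20}$ from~\eqref{agentseq!eqnu2_second}. Rewriting each sum as an average ($\sum_k 1/g_x^{(k)}=p\,\Avg(1/g_x^{(k)})$, $\sum_k g_v^{(k)}/g_x^{(k)}=p\,\Avg(g_v^{(k)}/g_x^{(k)})$, $\sum_k g_v^{(k)}/(g_x^{(k)})^2=p\,\Avg(g_v^{(k)}/(g_x^{(k)})^2)$), the ``cross'' contribution $\bigl(\sum 1/g_x\bigr)\bigl(\sum g_v/g_x\bigr)$ appearing inside $a_{30}(\gamma'(0))^2$ equals exactly $-a_{12}$, so it cancels and leaves only the term proportional to $\Avg(g_v^{(k)}/(g_x^{(k)})^2)$; dividing by $a_{20}$ and tracking the $2^{\pm p}$ and $p$ factors yields equation~\eqref{charpolyzero!eq_c2}. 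Finally, inserting~\eqref{charpolyzero!eq_c1} and~\eqref{charpolyzero!eq_c2} into~\eqref{charpolyexpand!eq_gamma_taylor}, with the $\phi^2$ coefficient being $\tfrac12\gamma''(0)$, produces~\eqref{cpolyzero!eq_zerolocus_secorder}.

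The difficulty here is not conceptual: all of the genuinely delicate steps --- identifying which permutations survive in each determinant coefficient and evaluating $a_{02},a_{12},a_{20},a_{30}$ --- were carried out in Section~\ref{charpolyexpand!start}. What remains is careful bookkeeping of the powers of $2$ and $p$ and confirming the cancellation of the cross term, exactly the kind of manipulation one should double-check with a computer algebra system, as already suggested after Proposition~\ref{agentseq!prop_gamma_expand}. The one point to stay alert to is ensuring that \emph{every} odd-order-in-$\phi$ coefficient really vanishes, i.e.\ that the symmetry assumption~\eqref{symlap!assume_psi_equal} is invoked before simplifying~\eqref{agentseq!taylor_solve_c2}; without it the numerator would also contain $a_{21}$ and $a_{03}$ and the clean formula~\eqref{charpolyzero!eq_c2} would not hold.
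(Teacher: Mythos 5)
Your proposal is correct and follows essentially the same route as the paper: apply Proposition~\ref{agentseq!prop_gamma_expand} with the coefficients from Propositions~\ref{agentseq!cor_deriv_akm} and~\ref{agentseq!prop_term_a30}, use the vanishing of the odd-index-in-$\phi$ coefficients to reduce the $\gamma''(0)$ formula to $(a_{30}a_{02}-a_{12}a_{20})/a_{20}^2$, and observe the cancellation of the cross term $\bigl(\sum 1/g_x^{(k)}\bigr)\bigl(\sum g_v^{(j)}/g_x^{(j)}\bigr)$. You are in fact slightly more explicit than the paper about why $a_{21}=a_{03}=0$ and about the factor $\tfrac12\gamma''(0)$ in the final expansion, both of which the paper leaves implicit.
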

\begin{proof}
To find $\gamma'(0)$ we start with Proposition \ref{agentseq!prop_gamma_expand}.
Then use Proposition \ref{agentseq!prop_term_a20} equation \ERef{agentseq!eqnu2_second}
and Proposition \ref{agentseq!cor_deriv_akm} equation \ERef{agentseq!cor_eq_a_02}.
We get,
\begin{align*}
\gamma'(0) &= \pm \sqrt{ - \frac{ a_{02} }{  a_{20} } }
     = \pm \sqrt{ - \frac{ 1 }{ (2p) \sum\limits_{k=0}^{p-1} {1/g_{x}^{(k)} } } }
\end{align*}
Equation \ERef{charpolyzero!eq_c1} follows.

To find $\gamma''(0)$ use Propositions \ref{agentseq!prop_gamma_expand} and \ref{agentseq!prop_gamma_expand}.
\begin{align*}
\gamma''(0)  &= - \frac{ a_{30} (\gamma'(0))^3 + a_{21} (\gamma'(0))^2 + a_{12} \gamma'(0) + a_{03} }
               { a_{20} \gamma'(0) }
    = - \frac{ a_{30} (\gamma'(0))^2 + a_{12} }
             { a_{20} }
    = \frac{ a_{30} a_{02} - a_{12} a_{20} }{ a_{20}^{2} }
\end{align*}
The coefficients $a_{20}$ and $a_{30}$ are computed in Proposition \ref{agentseq!prop_term_a30}.
The coefficients $a_{12}$ and $a_{02}$ are computed in Proposition \ref{agentseq!cor_deriv_akm}.
Assemble the coefficients, cancel the terms $\prod_{j=0}^{p-1} { g_{x}^{(j)} }$ and $2^{p}$
and we get
\begin{align*}
\gamma''(0)  &= \frac{ a_{30} a_{02} - a_{12} a_{20} }{ a_{20}^{2} }
  \\
  &= \frac{ 1 }{  4 p^4 \Avg \left( 1/g_{x}^{(k)} \right)^{2} }
     \left(  (2p)
    \left(
      \sum\limits_{k=0}^{p-1} {
        \frac{1}{g_x^{(k)}}
      }
      \sum\limits_{j=0}^{p-1} {
        \frac{g_v^{(j)}}{ g_x^{(j)}}
      }
     -
      \sum\limits_{k=0}^{p-1} { \frac{g_v^{(k)}}{ g_x^{(k)} g_x^{(k)} } }
    \right)
    -
    \left(
      (2p)
      \sum\limits_{j=0}^{p-1} { \frac{ g_{v}^{(j)} }{ g_{x}^{(j)} } }
      \sum\limits_{k=0}^{p-1} { \frac{ 1 }{  g_{x}^{(k)} } }
    \right)
  \right)
   \\
  &= - \frac{ (2p) \sum\limits_{k=0}^{p-1} { \frac{g_v^{(k)}}{ g_x^{(k)} g_x^{(k)} } } }
          {  4 p^4 \Avg \left( 1/g_{x}^{(k)} \right)^{2} }
   = - \frac{ \Avg \left( \frac{g_v^{(k)}}{ g_x^{(k)} g_x^{(k)} } \right) }
        { 2 p^2 \Avg \left( 1/g_{x}^{(k)} \right)^{2} }
\end{align*}
\end{proof}

\begin{Remark}
Notice that the derivative $\gamma'(0)$ is imaginary and the second order $\gamma''(0)$ is real and negative.
This is consistent with the stability of Theorem \ref{symlap!thm_LvLx_stable}.
\end{Remark}

We define two positive values $c_{1}, c_{2}$ by,
\begin{align}
\label{cpolyzero!eq_def_c1}
  c_{1} = p |\gamma'(0)|
              = \sqrt{ \frac{ 1 }{ 2 \Avg \left( 1/g_{x}^{(k)} \right) } }
  \\
\label{cpolyzero!eq_def_c2}
  c_{2} =  p^2 | \gamma''(0) |
              = \frac{ \Avg \left( \frac{g_v^{(k)}}{ g_x^{(k)} g_x^{(k)} } \right) }
                     { \left( 2 \Avg \left( \frac{1}{ g_x^{(k)} } \right) \right)^{2} }
\end{align}

Recall that $\phi = \frac{2 \pi }{q} m$.
We re-write equation \ERef{cpolyzero!eq_zerolocus_secorder} as,
\begin{align}
\gamma_{m}
  &= \pm i c_{1} \left( \frac{2 \pi }{p q} m \right)
     - c_{2} \left( \frac{ 2 \pi }{p q} m \right)^2
     + \cdots
\label{cpolyzero!eq_gamma_hats_N}
   = \pm i c_{1} \left( \frac{2 \pi }{N} m \right)
     - c_{2} \left( \frac{ 2 \pi }{N} m \right)^2
     + \cdots
\end{align}

Recall that we shuffled coordinates to get the eigenvector in equation \ref{linnn!eq_eigenvect}.
This eigenvector, in the original coordinates, is
\[
 \begin{pmatrix}
       \Vect{ \epsilon } \otimes \Vect{v}_{m}(\phi)        \\
       \nu  \Vect{ \epsilon } \otimes \Vect{v}_{m}(\phi)
  \end{pmatrix}
\]
The solutions have time dependence through the factor,
\[
\exp\left( \gamma(\phi) t  \right)
                \exp\left( \frac{ 2 \pi i}{q} \mu m \right)
  = \exp\left( \frac{2 \pi i }{pq } m \left( \pm c_1 t + p \mu \right) \right)
      \exp\left( - c_2 \left( \frac{ 2 \pi }{ p q } m \right)^{2} t  \right)
\]

The signal velocity is determined by the factor $\pm c_1 t + p \mu$.
There are two waves going in opposite directions with equal velocities.
We see that the time for the signal to travel $p$ agents is given by $c_{1} T_{p} = p$ so that,
\begin{equation}
\label{soln!eq_T1_estimate}
T_{1}
  = \frac{p}{c_{1} }
  = p \sqrt{2 \Avg \left( 1/g_{x}^{(k)} \right)}.
\end{equation}
The factor, $\exp\left( - c_2 \left( \frac{ 2 \pi }{ p q } m \right)^{2} t  \right)$
is a damping term that introduces a dispersion relation.
The larger $c_2$ is the larger the damping factor.

In our original problem we assumed the $N = pq$ agents were assembled
as $q$ groups of $p$ randomly weighted agents.
The values $c_{1}$ and $c_{2}$ are averages over the $p$ agents
which equals the average over the $N$ agents.
What if we have $N$ randomly weighted agents.
This amounts to setting $q = 1$.
Does equation \ERef{cpolyzero!eq_gamma_hats_N} still apply?
In the following section we shall run a variety of simulations
on $N$ uniquely weighted agents and compare to our solutions.

\section{Simulations}
\label{sec!simul}

The computation in sections \ref{charpolyexpand!start} and \ref{sec!charpolyzero}
assumed a condition that is a bit stronger than periodic boundary conditions.
It assumed that $p$ independently weighted agents were duplicated $q$ times
for a total sequence of $N = pq$ agents and that this $N$-string satisfies periodic boundary conditions.
In this simulation section we shall forgo this assumption and
shall set $q = 1$ and set the sequence on the real line without periodic boundary conditions.
The scales, $g_{x}^{(\alpha)}, g_{v}^{(\alpha)}$, are chosen independently.
We will demonstrate that the periodicity of the $p$-sequence is not required in practice.
In these simulations we shall assume the conditions
in equations \ERef{symlap!assume_rhox_equal} and \ERef{symlap!assume_rhov_equal}
so that $\rho_{x,1} = \rho_{x,-1}$ and $\rho_{v,1} = \rho_{v,-1}$.

\begin{figure}[h!]
\centering
\begin{subfigure}{.45\textwidth}
  \centering
  \includegraphics[width=\textwidth]{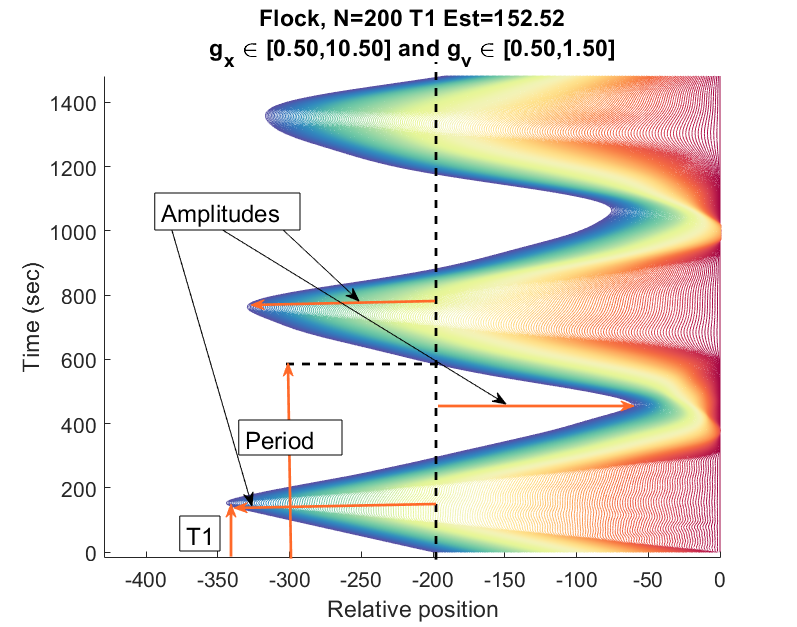}
  \caption{Flock with times indicated}
  \label{fig:sim_flock_Init_N200}
\end{subfigure}			%
\begin{subfigure}{.45\textwidth}
  \centering
  \includegraphics[width=\textwidth]{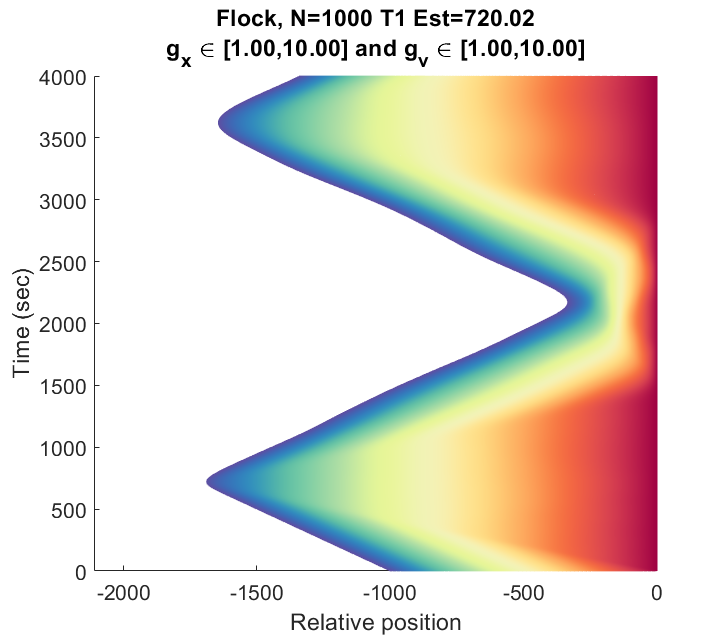}
  \caption{Flock of 1000 agents}
  \label{fig:sim_flock_N1000}
\end{subfigure}
  \caption{Flock Examples}
  \label{fig:sim_basicplot}
\end{figure}

\begin{figure}[h!]
\centering
\begin{subfigure}{.45\textwidth}
  \centering
  \includegraphics[width=\textwidth]{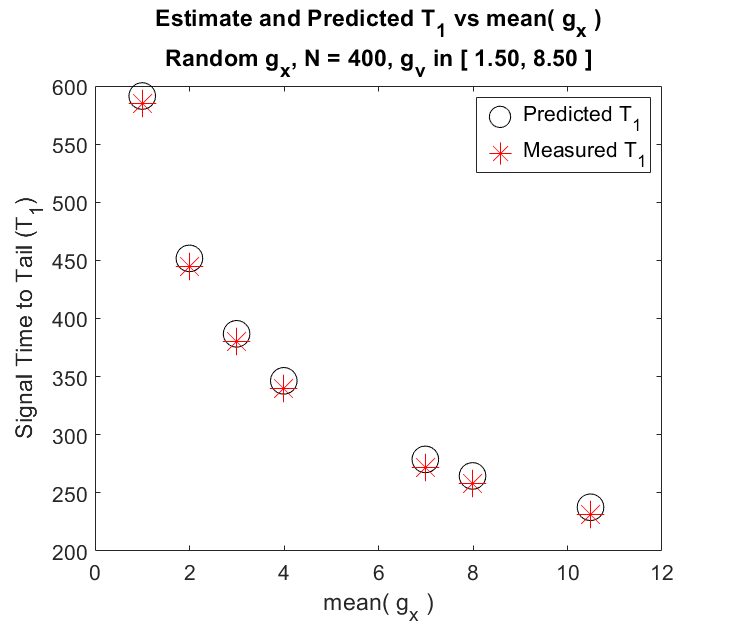}
  \caption{Estimate for $T_1$ when $g_x$ is random from uniform distribution}
  \label{fig:sim_T1vsGx_Random}
\end{subfigure}
\begin{subfigure}{.45\textwidth}
  \centering
  \includegraphics[width=\textwidth]{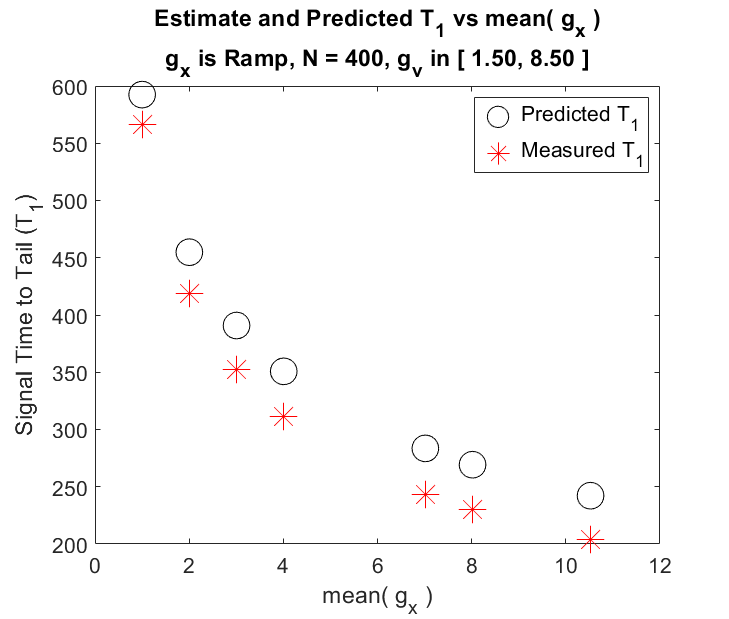}
  \caption{Estimate for $T_1$ when $g_x$ is a ramp}
  \label{fig:sim_T1vsGx_Ramp}
\end{subfigure}
  \caption{Prediction of the first response time $T_1$ compared to computational predictions}
  \label{fig:sim_basicplot2}
\end{figure}

In Figure \ref{fig:sim_flock_Init_N200} we show a linear flock with $N = p = 200$ agents.
The plot shows the simulation estimate for $T_1$, which we derived in equation \ERef{soln!eq_T1_estimate}.
The signal velocity actually depends on the constituent waves in the wave packet
that moving from the leader to the tail.
The simple model used to estimate $T_1$ in the simulations is to pick the time
where the distance to the leader is the greatest.
Also shown in the plot are the distance used to estimate the period
and the peaks used to evaluate amplitude ratios.
We estimate the amplitude by choosing the point with the largest distance to the leader,
just as we did to estimate $T_1$.
The values $g_{x}^{(\alpha)}$ are independently chosen from a uniform distribution on $[ 0.50, 10.50 ]$
and the $g_{v}^{(\alpha)}$ from $[ 0.50, 1.50 ]$.
For this particular snapshot of random variables the value $T_1$ in equation \ERef{soln!eq_T1_estimate}
gives $T_1 = 153.0$ which is extremely close to the observed value.
In this graph each agent trajectory is in a different color so that flock dynamics are evident.

In Figure \ref{fig:sim_flock_N1000} we have a basic simulation of $N=1000$.
The weights for $g_{x}^{(\alpha)}$ are independently chosen from a uniform distribution on $[1, 12]$
and $g_{v}^{(\alpha)}$ from a uniform distribution on  $[ 1, 12 ]$.
In this simulation the estimated $T_1 = 720$ and the computed value is $T_1 = 721$,
which is, again, quite close.

Equation \ERef{soln!eq_T1_estimate} gives a prediction for the first response time.
In figure \ref{fig:sim_T1vsGx_Random} we compare the computed value of $T_1$
to the simulation estimate, as we vary the mean of $g_{x}^{(\alpha)}$.
For each test we select $N = p$ values $g_{x}^{(\alpha)}$ independently
from a uniform distribution with a mean shown in the x-axis.
In figure \ref{fig:sim_T1vsGx_Ramp} we run simulations where $g_{x}^{(\alpha)}$
form a ramp as $\alpha = 0, \cdots p-1$.
The ramp starts with $g_{x}^{(0)}$ small and increases so that $g_{x}^{(p-1)}$ has a maximum value
according to the formula,
\[
 g_{x}^{(\alpha)} = g_{low} + \alpha \frac{ ( g_{hi} - g_{low} ) }{ p - 1 },
   \;\;
   \alpha = 0, 1, \cdots, p-1.
\]
The $T_1$ estimates for the random distribution are very close to the predicted value.
The ramp ``distribution'' is not random and it has a large discontinuity at $\alpha = 0$.
Although these are clear differences from the uniform distribution, we do not yet understand why the ramp distribution makes the $T_1$ prediction less accurate.

\begin{figure}[h!]
\centering
\begin{subfigure}{.45\textwidth}
  \centering
  \includegraphics[width=\textwidth]{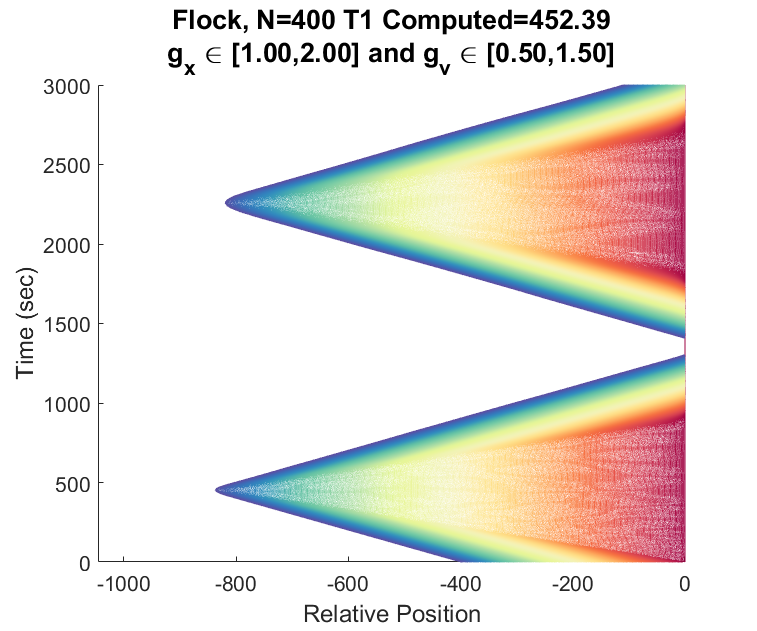}
  \caption{Mean $g_v$ is smaller so $c_2$ is smaller}
  \label{fig:sim_rhoV_Low_N400}
\end{subfigure}			%
\begin{subfigure}{.45\textwidth}
  \centering
  \includegraphics[width=\textwidth]{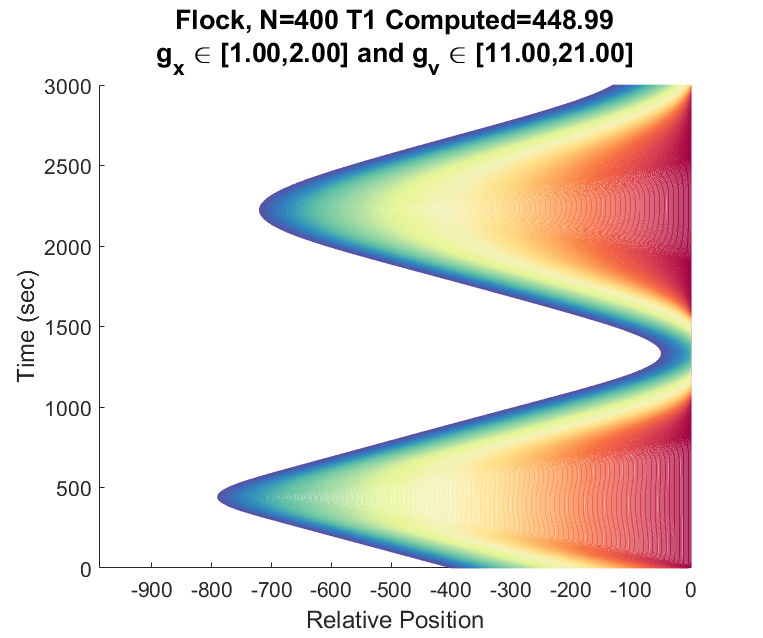}
  \caption{Mean $g_v$ is larger so $c_2$ is larger}
  \label{fig:sim_rhoV_High_N400}
\end{subfigure}
  \caption{Changing $c_2$ changes dispersion relation}
  \label{fig:sim_unstable_N1000a}
\end{figure}

The value $c_1$ in equation \ERef{cpolyzero!eq_def_c1} determines the first response
as it appears in an imaginary expential.
The value $c_2$ in equation \ERef{cpolyzero!eq_def_c2} appears in a real exponential
and is negative which indicates a stable solution.
This is a dispersion term and tends to disperse the pure frequencies
so that the amplitude edges are muted.
Figure \ref{fig:sim_rhoV_Low_N400} shows a plot with small $c_2$
and figure \ref{fig:sim_rhoV_High_N400} is the same system
except the mean value of $g_{v}^{(\alpha)}$ is larger.
This will increase the value of $c_2$ and the resulting peaks are rounder.

\begin{figure}[h!]
\centering
\begin{subfigure}{.45\textwidth}
  \centering
  \includegraphics[width=\textwidth]{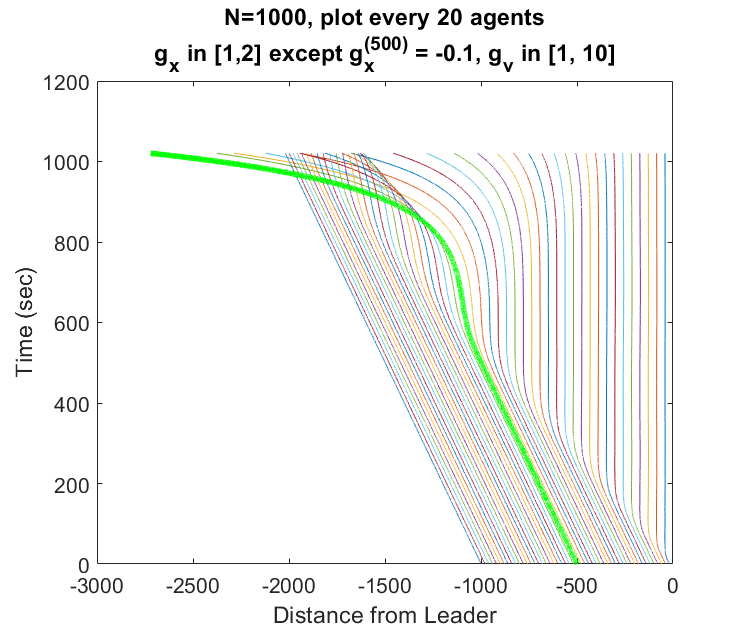}
  \caption{One agent (in green) has negative weight}
  \label{fig:sim_unstable_gxneg_N1000}
\end{subfigure}			%
\begin{subfigure}{.45\textwidth}
  \centering
  \includegraphics[width=\textwidth]{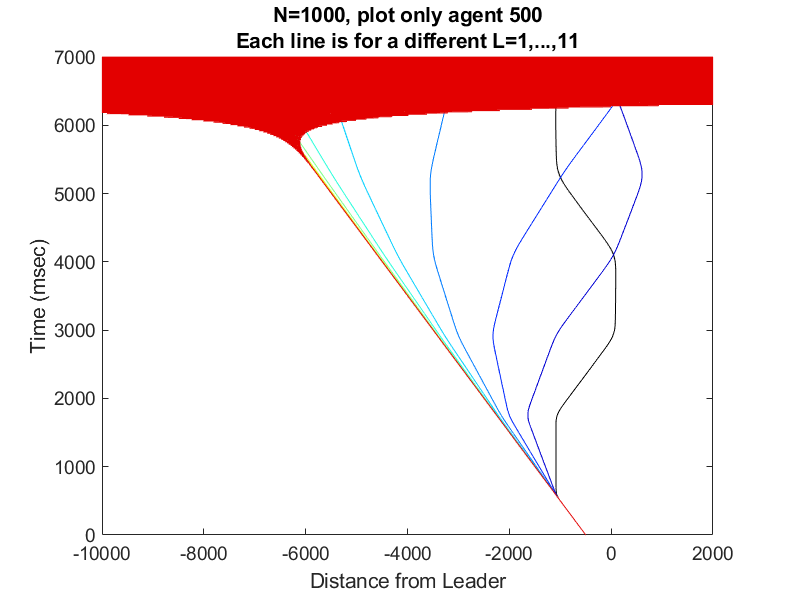}
  \caption{Set $\rho_{x,2}^{(j)}=0.25$ for agents $j=500, \cdots, 500+(L-1)$
           where $L = 0, \cdots 11$.
           The more rows with $\rho_{x,1} = 0.25$ the faster the onset of instability.}
  \label{fig:sim_unstable_rhoxvar_N1000}
\end{subfigure}
  \caption{Simulations demonstrating instability}
  \label{fig:sim_unstable_N1000}
\end{figure}

The numerics indicate that
the system is stable only when $g_{x}^{(\alpha)}$ are non-negative for all $\alpha$.
Setting $g_{x}^{(\alpha)}$ negative, for a single agent has an immediate effect on system stability.
Figure \ref{fig:sim_unstable_gxneg_N1000} shows a simulation
where a single agent has negative weight.
The plot shows a subset of the flock.
The agent with negative weight is shown in green.
When the signal to move reaches this agent, the agent moves in the opposite direction
and the system starts on an unstable trajectory.
If the weight of this single agent was zero
then the agents to the left of it would never react to the motion of the leading agent
since we are only including nearest neighbor interactions.

Theorem \ref{nnsystem!thm_nec_for_stab} states a necessary condition for stability.
The difficulty with the theorem is that
both $\prod_i\;\rho_{x,1}^{(i)}$ and $\prod_i\;\rho_{x,-1}^{(i)}$
are very small and go to zero as $N \to \infty$.
If we have $\rho_{x,1} = \rho_{x,-1}$ then the condition is satisfied but
changing a single value of $\rho_{x,1}^{(\alpha_0)}$ to $-0.25$ should result in an unstable system.
This is difficult to show numerically.
In Figure \ref{fig:sim_unstable_N1000}, we plot the middle agent for a series of simulations.
Each simulation has $\rho_{x,1} = \rho_{x,-1} = -1/2$
except for $L$ agents centered around agent $500$.
For the given simulations, the system is unstable when $L \ge 5$.
The greater $L$ the faster the onset of the instability.

\begin{figure}[h!]
\centering
\begin{subfigure}{.45\textwidth}
  \centering
  \includegraphics[width=\textwidth]{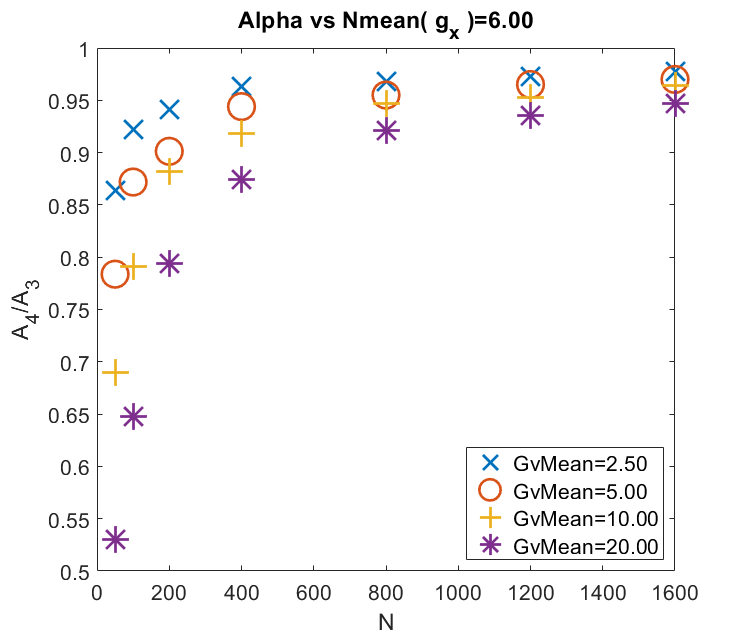}
  \caption{Plot showing $\alpha$ as $N \to \infty$}
  \label{fig:sim_AlphaVsN_N1000_Super}
\end{subfigure}			%
\begin{subfigure}{.45\textwidth}
  \centering
  \includegraphics[width=\textwidth]{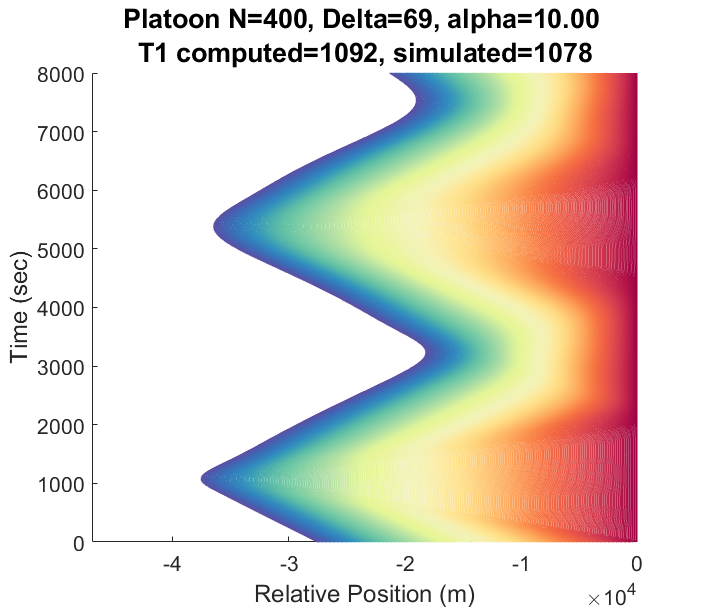}
  \caption{Simulation of truck convoy with 400 agents.}
  \label{fig:sim_TruckConvoy_N400}
\end{subfigure}
\end{figure}

A previous work \cite{Herbrych2015DynamicsOL} derives a condition
on the ratio of successive flock amplitudes.
There it is shown that as $N \to \infty$ then the ratio goes to $1.0$.
In Figure \ref{fig:sim_AlphaVsN_N1000_Super} we plot amplitudes $A_4/A_3$
for various distributions of $g_v$.
We do not use the ratio $A_{2} / A_{1}$ since the first amplitude is close
to the initial condition and behaves differently than subsequent peaks.
The curves depend on the mean of $g_{v}^{(\alpha)}$
as altering $g_{v}^{(\alpha)}$ alters $c_2$ (see equation \ERef{cpolyzero!eq_def_c2}).
We cannot conclusively conclude that this is exactly correct, and more simulation work is required.

We conclude with the realistic system discussed in the introduction.
With this example we demonstrate that the tools presented in this paper
can be used to analyze more complicated and realistic problems.
We make some rough estimates in this next section.
A automotive engineer could refine these numbers with more realistic estimates.
We model a convoy of $N$ trucks traveling on the highway.
The convey attempts to keep a fixed spacing between trucks
and the trucks are all different.
As the convoy travels, lighter cars might, inadvertently, enter the convoy
creating a 1-dimensional convoy with very different agents.
To use our model, we must estimate the agent weights $g_{x}^{k}$ and $g_{v}^{k}$.
The weights for agent $k$ are force coupling divided by the mass of the agent.
The mass of a 18 wheel truck is somewhere between 14 and 40 thousand kilograms
and the coupling force is determined by the torque of the engine.
To make things simpler we shall assume the force divided by the mass
produces a given acceleration and we can estimate this acceleration.
For example, a truck can accelerate from $0$ to $60$ mph = $26.8$ m/sec in 1 to 5 minutes.
So we take our truck weights $g_{x}^{k}$ in the range,
\[
 g_{x}^{k} \in [ 26.8 / 60, 26.8 / 300 ],
   \; \;
   k \RText{ is a truck}.
\]
We insert cars into the convoy by randomly replacing $10\%$ of the agents with lighter cars.
Cars, typically, have higher power to mass and so have larger weights.
We take a collection of cars that accelerate from $0$ to $60$ in a range of $6$ to $20$ seconds,
so that for car agents,
\[
 g_{x}^{k} \in [ 26.8 / 6, 26.8 / 20 ],
   \; \;
   k \RText{ is a car}.
\]
To guarantee stability we take $G_v = \alpha G_x$ where $\alpha = 10.0$.
Increasing $\alpha$, as we've seen, will increase the damping.
U.S. 18-wheel trucks are typically around $23$ meters long.
The convoy attempts to keep a distance of $3 \times 23 = 69$ meters between the agents.

The simulation results are shown in figure \ref{fig:sim_TruckConvoy_N400}.
The convey of $400$ starts with a spacing of $69$ meters so is almost $28km$ long.
The first track suddenly increases its speed $10$ meters/second
and it takes $1095$ seconds for the signal to reach the last truck.
The time duration is long because the weights are small (e.g the trucks accelerate slowly).
The distance from the leader to the tail lengthens to $37.5km = 94$m/truck
before the tail starts to catch up.
The damping is not critical, and the tail overshoots the optimal distance
and the entire convey shrinks to $18km = 45$ m/truck before expanding again.
This simulation assumed $\rho_{v,1} = \rho_{v,-1}$.
There is some evidence that system is more responsive if this condition is removed.
We will explore that in a subsequent paper.


\newpage

\bibliographystyle{plain}
\bibliography{./SymmLap-DistinctGs-Refs}

\newcommand{\noop}[1]{}
\begin{thebibliography}{10}

\bibitem{Ashcroft_SolidStat_1976}
N.W. Ashcroft and N.D. Mermin.
\newblock {\em {Solid State Physics}}.
\newblock Saunders College, Philadelphia, 1976.

\bibitem{VeermanBaldivieso_2019}
Pablo~E. Baldivieso and J.J.P. Veerman.
\newblock Stability conditions for coupled autonomous vehicles formations.
\newblock {\em IEEE Transactions on Control of Network Systems}, \noop{3001}in
  press.

\bibitem{VeermanTransients_2016}
C.E. Cantos, D.K Hammond, and J.J.P. Veerman.
\newblock Transients in the synchronization of assymmetrically coupled
  oscillator arrays.
\newblock {\em The European Physical Journal Special Topics}, 225:1115--1125,
  2016.

\bibitem{VeermanSigVel_2016}
C.E. Cantos, J.J.P. Veerman, and D.K. Hammond.
\newblock Signal velocity in oscillator arrays.
\newblock {\em The European Physical Journal Special Topics}, 225:1199--1210,
  2016.

\bibitem{TrafficCarFollowing_Montroll_1957}
Robert~E. Chandler, Robert Herman, and Elliott~W. Montroll.
\newblock Traffic dynamics: Studies in car following.
\newblock {\em Operations Research}, 6:165--184, 1957.

\bibitem{Dyson_LinChain_PhysRev}
Freeman~J. Dyson.
\newblock The dynamics of a disordered linear chain.
\newblock {\em Phys. Rev.}, 92:1331--1338, Dec 1953.

\bibitem{StabLargePlatoon_HaoBarooah_2013}
He~Hao and Prabir Barooah.
\newblock Stability and robustness of large platoons of vehicle with
  double-integrator models and nearest neighbor interaction.
\newblock {\em International Journal of Robust and Nonlinear Control}, 23, 12
  2013.

\bibitem{Herbrych2015DynamicsOL}
J.~Herbrych, A.~Chazirakis, N.~Christakis, and J.J.P. Veerman.
\newblock Dynamics of locally coupled agents with next nearest neighbor
  interaction.
\newblock {\em Differential Equations and Dynamical Systems}, 2017.

\bibitem{KernerTraffic_2004}
B.S. Kerner.
\newblock {\em The Physics of Traffic}.
\newblock Springer-Verlag, 2004.

\bibitem{DistribControlAutoVeh_LiZheng_2017}
S.~E. {Li}, Y.~{Zheng}, K.~{Li}, Y.~{Wu}, J.~K. {Hedrick}, F.~{Gao}, and
  H.~{Zhang}.
\newblock Dynamical modeling and distributed control of connected and automated
  vehicles: Challenges and opportunities.
\newblock {\em IEEE Intelligent Transportation Systems Magazine}, 9(3):46--58,
  2017.

\bibitem{LinFardadJovan2012VehicleFormNN}
Fu~Lin, Makan Fardad, and Mihailo~R. Jovanovic.
\newblock Optimal control of vehicular formations with nearest neighbor
  interactions.
\newblock {\em IEEE Transactions on Automatic Control}, 57(9):2203--2218, 2012.

\bibitem{Mackenroth_RobustControl_2004}
Uwe Mackenroth.
\newblock {\em Robust Control Systems}.
\newblock Springer-Verlag, 2004.

\bibitem{RenBeard_ConcControl_2008}
Wei Ren and Randal Beard.
\newblock {\em Distributed Consensus in Multi-vehicle Cooperative Control}.
\newblock Springer-Verlag, 2008.

\bibitem{StringInstability_Middleton}
S.~Studli, M.~M. Seron, and R.~H. Middleton.
\newblock Vehicular platoons in cyclic interconnections.
\newblock {\em Automatica}, 94:283--293, 2018.

\end{thebibliography}


\end{document}